\newtheorem{thm}{Theorem}[section]
\newtheorem{lemma}[thm]{Lemma}
\newtheorem{prop}[thm]{Proposition}
\newcommand{\sign}{\mathrm{sign}}
\newcommand{\mean}{\mathsf{E}}
\newcommand{\var}{\mathsf{D}}
\begin{document}
\runningheads{Estimation of the Exponent of SL}{O.~Yanushkevichiene and V.~V.~Saenko}
\title{Estimation  of the characteristic exponent of stable laws}

\author{O. Yanushkevichiene\addressnum{1}, V. V. Saenko\addressnum{2}\corrauth}
\affiliation{Institute of Mathematics and Informatics, Vilnius University\\Lithuanian University of Educational Science\\
Ulyanovsk State University}
\address{\addressnum{1}Institute of Mathematics and Informatics, Vilnius University, Akademijos, 4, Vilnius LT - 08663, Lithuania,\\
 \addressnum{1}Lithuanian University of Educational Science, Studentu 39, Vilnius LT-08106, Lithuania, (e-mail:olgjan@zebra.lt),\\
 \addressnum{2}Ulyanovsk State University, Leo Tolstoy str., 42, Ulyanovsk, 432970, Russia (e-mail:saenkovv@gmail.com)}

\acks{The work is completed by Russian Foundation for basic Research (grants 13-01-00585, 12-01-33074, 12-01-00660) and Ministry of Education and Science of the Russian Federation}

\keywords{stable law; estimation of parameters of the stable law; mean-square error; plasma turbulence;
local fluctuation fluxes}

\begin{abstract}
A statistical algorithm for estimating the characteristic parameter $\alpha$ of the
stable law  is presented and the estimate of its quadratic deviation
is obtained in the paper. This algorithm is applied in the description of the fluctuation induced flux in the edge region of the plasma cord.
\end{abstract}

\maketitle

\section{Introduction}
The importance of investigating stable laws stems from the fact that all kinds of
limit distributions of linearly-normed sums of an infinitely increasing number of independent
identically distributed random variable are stable laws. This fact stipulates the
application of stable laws in the problems of economics, physics, and other spheres of science.
The latter makes the problem of estimating the parameters of stable laws very important.

In the book of \cite{Zolotarev:AMS:86} the major features of the stable law are described.
A class of stable distributions comprises a four parametric family of the functions
$g(x;\alpha,\beta,\gamma,\lambda)$ with the parameters
$$
0<\alpha\leqslant 2,\quad -1\leqslant\beta\leqslant1,\quad -\infty<\gamma<\infty,\quad\lambda>0
$$
(we shall call the parameter values admissible). This class of distributions is usually defined
by the corresponding set of characteristic functions that can be written, e.g. in the following
way (see \cite{Zolotarev:AMS:86})
$$
\hat g(k;\alpha,\beta,\lambda,\gamma)=\left\{\begin{array}{ll}
\exp\left\{ \lambda\left( ik\gamma-|k|^\alpha+
i|k|^\alpha\beta\tan\left(\frac{\pi}{2}\alpha \right)\sign k \right)\right\},& \alpha\neq1\\
\exp\left\{ \lambda\left( ik\gamma-|k|^\alpha-
ik\beta\frac{2}{\pi}\log|k| \right)\right\},&\alpha=1
\end{array}\right.,
$$
The class of stable distributions includes the normal distribution $(\alpha=2)$, Cauchy distribution
$(\alpha=1)$ and the Levy distribution $(\alpha=1/2)$ together with that symmetric to it ($\alpha=1/2,\beta=-1$).
Naturally, those variants of stable laws that appear due to a change in the values of unfixed parameters
$\gamma$ and $\lambda$ should also be attributed to them. In other cases we did not succeed in
expressing the density of stable distributions by elementary functions.

The problem of parameter estimation of stable laws is hampered by the absence of explicit
expressions of densities. However, there are already a lot of works devoted to this problem. In our work
the exponent $\alpha$ of stable laws is estimated. The estimation is based on the method, proposed
by V. Zolotarev, for evaluating the parameters $\gamma,\theta,\tau$ of strictly stable random variables.
The estimation of quadratic deviation of the constructed estimate has been obtained.

Before stating the main problem of this work we recall some results obtained in the book of
\cite{Zolotarev:AMS:86}. Let there be a sample $X_1, X_2,\dots,X_n$ of independent
identically distributed random variables (i.i.d.r.v.) having a strictly stable distribution with
the characteristic function
\begin{equation}\label{eq:formE}
\hat g(k;\nu,\theta,\tau)=\exp\left\{\hspace{-0.5mm}-\hspace{-0.5mm}\exp\left\{ \nu^{-1/2}\hspace{-1mm} \left( \log|k|\hspace{-0.5mm}+\hspace{-0.5mm}\tau\hspace{-0.5mm}-\hspace{-0.5mm}i\frac{\pi}{2}\theta\sign k \right)+
\bbbc\left( \nu^{-1/2}\hspace{-0.5mm}-\hspace{-0.5mm}1 \right)\right\}\right\},
\end{equation}
where $\bbbc=0.577\dots$ is the Euler constant and the parameters are varying within the limits of the
interval $\nu\geqslant1/4$, $|\theta|\leqslant\min(1,2\sqrt{\nu}-1)$, $\tau<\infty$. According
to this sample we construct two sets of independent identically distributed (within the framework of each
sample) random variables $U_i=\sign Y_i$ and $V_i=\log|Y_i|$, $i=1,\dots,n$. Then the estimator
\begin{equation}\label{eq:ssl_est}
\tilde\theta=A_U,\quad \tilde\tau=A_V, \quad
\end{equation}
\begin{equation}\label{eq:nu_est}
\tilde\nu=\max\left (\hat\nu,\left (1+|\tilde\theta| \right)^2/4 \right)
\end{equation}
will be statistical estimates $\tilde \nu, \tilde\theta$ and $\tilde\tau$ of the parameters
$\nu,\theta,$ and $\tau$, where
\begin{equation}\label{eq:nuest0}
\hat\nu=\frac{6}{\pi^2}B^2_V-\frac{3}{2}B^2_U+1,
\end{equation}
and
$$
A_X=\frac{1}{n}\sum_{j=1}^nX_j, \quad B_X^2=\frac{1}{n-1}\sum_{j=1}^n\left(X_j-A_X \right)^2
$$
are the sample mean and variance.

The basic results of this work are as follows:
\begin{thm}\label{theo:alpha_var}
The estimate $\tilde\alpha=1/\sqrt{\tilde\nu}$ of the parameter $\alpha$ is asymptotically unbiased of
order $1/\sqrt{n}$ and its quadratic deviation satisfies the inequality
\begin{align}
\mean\left(\hat\alpha-\alpha\right)^2&\leqslant\frac{\mean(\hat\nu-\nu)^2}{4\nu^3}+
\frac{12}{n\nu^{3/2}}\sqrt{\mean(\hat\nu-\nu)^2}\left(P_3^2+\frac{\sqrt{P_3'}}{n^{1/8}}\right)\nonumber\\
&+144\left(P_3^4/n^2+P_3'/n^{9/4}\right),\label{eq:alpha_vat_main}
\end{align}
where the expressions $P_3$, $P_3'$ will be defined later.
\end{thm}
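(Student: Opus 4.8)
The plan is to regard $\tilde\alpha=\tilde\nu^{-1/2}$ as a smooth function of $\tilde\nu$ and to exploit the reparametrisation $\alpha=\nu^{-1/2}$, so that the whole problem becomes a second-order (delta-method) analysis of the deviation $\tilde\nu-\nu$. Writing $f(x)=x^{-1/2}$, Taylor's formula with Lagrange remainder gives
$$\tilde\alpha-\alpha=f(\tilde\nu)-f(\nu)=-\tfrac12\nu^{-3/2}(\tilde\nu-\nu)+\tfrac38\,\xi^{-5/2}(\tilde\nu-\nu)^2,$$
for some random $\xi$ lying between $\tilde\nu$ and $\nu$. The crucial structural input is that, by construction \eqref{eq:nu_est}, $\tilde\nu\geqslant1/4$, while the admissible range forces $\nu\geqslant1/4$; hence $\xi\geqslant1/4$ and the derivative bounds $\xi^{-5/2}\leqslant4^{5/2}=32$ and $\xi^{-5}\leqslant4^{5}=1024$ hold deterministically. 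These two numbers are exactly what will generate the constants $12$ and $144$ in \eqref{eq:alpha_vat_main}.

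For the bias statement I would take expectations in the Taylor expansion. Because the sample variances $B_U^2,B_V^2$ are unbiased and \eqref{eq:nuest0} is precisely the linear combination of the population variances of $U=\sign Y$ and $V=\log|Y|$ that reproduces $\nu$ (a Zolotarev identity, in which the $\theta^2$ contributions cancel), the estimator $\hat\nu$ is unbiased, $\mean\hat\nu=\nu$. Consequently $\mean(\tilde\nu-\nu)$ is governed only by the truncation in \eqref{eq:nu_est} and is $O(n^{-1/2})$, so the first-order term contributes at most $O(n^{-1/2})$, while the remainder is controlled by $\tfrac38\cdot32\,\mean(\tilde\nu-\nu)^2=12\,\mean(\tilde\nu-\nu)^2=O(n^{-1})$. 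Together these yield a bias of order $n^{-1/2}$, which is the asserted asymptotic unbiasedness.

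For the quadratic deviation I would square the expansion and take expectations,
$$\mean(\tilde\alpha-\alpha)^2=\frac{\mean(\tilde\nu-\nu)^2}{4\nu^3}-\frac38\nu^{-3/2}\mean\big[\xi^{-5/2}(\tilde\nu-\nu)^3\big]+\frac{9}{64}\mean\big[\xi^{-5}(\tilde\nu-\nu)^4\big].$$
The first summand is already the leading term of \eqref{eq:alpha_vat_main}. Passing to absolute values and bounding $\xi^{-5/2}\leqslant32$ in the cross term gives $12\,\nu^{-3/2}\mean|\tilde\nu-\nu|^3$, to which I apply Cauchy--Schwarz, $\mean|\tilde\nu-\nu|^3\leqslant\sqrt{\mean(\tilde\nu-\nu)^2}\,\sqrt{\mean(\tilde\nu-\nu)^4}$, while $\xi^{-5}\leqslant1024$ turns the last summand into $144\,\mean(\tilde\nu-\nu)^4$. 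It then remains only to insert a fourth-moment estimate of the form $\mean(\tilde\nu-\nu)^4\leqslant P_3^4/n^2+P_3'/n^{9/4}$ and to use $\sqrt{a+b}\leqslant\sqrt a+\sqrt b$ to obtain $\sqrt{\mean(\tilde\nu-\nu)^4}\leqslant n^{-1}\big(P_3^2+\sqrt{P_3'}\,n^{-1/8}\big)$; substituting this everywhere reproduces \eqref{eq:alpha_vat_main} verbatim.

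The main obstacle is the fourth-moment bound $\mean(\tilde\nu-\nu)^4\leqslant P_3^4/n^2+P_3'/n^{9/4}$. Since $\hat\nu$ is built from the sample variances $B_U^2$ and $B_V^2$, it is quadratic in the sample means; expanding $\hat\nu-\nu$ in centred sums of $U_i=\sign Y_i$ and $V_i=\log|Y_i|$ and raising to the fourth power produces a large collection of mixed moments of these variables up to eighth order, and the careful bookkeeping of coincident-index contributions is what yields the two distinct rates $n^{-2}$ and $n^{-9/4}$ and fixes the constants $P_3,P_3'$. A secondary technical point is the passage from $\tilde\nu$ to $\hat\nu$: on the event $\{\hat\nu\geqslant(1+|\tilde\theta|)^2/4\}$ the two coincide, and on its complement the truncation can only shrink $|\tilde\nu-\nu|$ (whenever $(1+|\tilde\theta|)^2/4\leqslant\nu$), so every moment bound established for $\hat\nu$ transfers to $\tilde\nu$.
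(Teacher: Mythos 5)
Your proposal is correct and follows essentially the same route as the paper: the Lagrange-remainder Taylor expansion of $\tilde\nu^{-1/2}$ about $\nu$, the deterministic bound $\xi\geqslant 1/4$ producing the constants $12$ and $144$, the Cauchy--Schwarz reduction $\mean|\tilde\nu-\nu|^3\leqslant\sqrt{\mean(\tilde\nu-\nu)^2\,\mean(\tilde\nu-\nu)^4}$, and the substitution of the fourth-moment bound $\mean(\tilde\nu-\nu)^4\leqslant P_3^4/n^2+P_3'/n^{9/4}$ (the paper's Lemma on this is exactly the technical input you flag, proved there by H\"older separation of $B_U^2$ and $B_V^2$ and an eighth-moment computation). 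Your bias argument via the unbiasedness of $\hat\nu$ is a sound addition that the paper's own proof leaves implicit.
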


The results obtained considerably improved the estimate of quadratic deviation of the parameter $\alpha$
obtained in \cite{Yanushkevichiene:LtMathRin:81}. In addition, an experimental research has been performed here.

\section{Algorithm for estimating the parameters of strictly stable laws}\label{sec:est}

\begin{prop}\label{prop:nu_est}
Transformation (\ref{eq:nu_est}) can only decrease the quadratic deviation of the estimate from the
real value of the parameter.
\end{prop}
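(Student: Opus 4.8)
The goal is the variance-reduction inequality $\mean(\tilde\nu-\nu)^2\leqslant\mean(\hat\nu-\nu)^2$, which is exactly what will let one bound the error of the genuine estimate $\tilde\nu$ through the cleaner $\hat\nu$ in Theorem~\ref{theo:alpha_var}. The guiding observation is that $x\mapsto\max(x,a)$ is the metric projection of $\mathbb{R}$ onto the half-line $[a,\infty)$, and projection onto a closed convex set that \emph{contains the true value} $\nu$ can never push a point away from $\nu$. Here the relevant set is $\{\nu':\nu'\geqslant(1+|\tilde\theta|)^2/4\}$, i.e. the feasibility region dictated by the admissibility constraint $|\theta|\leqslant2\sqrt{\nu}-1$ rewritten as $(1+|\theta|)^2/4\leqslant\nu$.

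First I would isolate the purely deterministic fact: for any real $x$ and any threshold $a\leqslant\nu$,
\[
\left(\max(x,a)-\nu\right)^2\leqslant(x-\nu)^2 .
\]
If $x\geqslant a$ the two sides coincide; if $x<a\leqslant\nu$ then $\max(x,a)=a$ and $0\leqslant\nu-a<\nu-x$, so squaring preserves the bound. Equivalently, writing the difference of the two sides on the truncation event as $(a-x)(a+x-2\nu)$ exhibits its sign at once: the factor $a-x$ is positive while $a+x-2\nu<2(a-\nu)\leqslant0$ precisely because $a\leqslant\nu$. Substituting $x=\hat\nu$ and taking expectations would then give the proposition for \emph{any} deterministic admissible lower bound.

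The step I expect to be the real obstacle is that the bound actually used, $a=(1+|\tilde\theta|)^2/4$, is \emph{random}, being built from $\tilde\theta=A_U$ rather than from the true $\theta$. The constraint only guarantees $(1+|\theta|)^2/4\leqslant\nu$ for the true parameter, so the inequality $a\leqslant\nu$ needed above can fail on the overshoot event $\{\,|\tilde\theta|>2\sqrt{\nu}-1\,\}$. To handle this I would condition on $U_1,\dots,U_n$, which fixes $a$: on $\{a\leqslant\nu\}$ the deterministic inequality applies verbatim, and on the complementary event the signed difference $(a-\hat\nu)(a+\hat\nu-2\nu)$ must be estimated directly. Controlling that residual term — using that $a$ is confined to $[1/4,1]$ and that $\tilde\theta$ is a sample mean of the bounded i.i.d. variables $U_i$, so its mean-square error is of order $1/n$ and the overshoot event is correspondingly rare — is where essentially all of the work sits; the contraction estimate itself is elementary.
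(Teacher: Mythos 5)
Your completed steps coincide with the paper's entire proof. The paper makes the same two-case split: if $\hat\nu\geqslant(1+|\tilde\theta|)^2/4$ the two estimates agree, and in the truncation case it invokes the admissibility condition $\nu\geqslant(1+|\theta|)^2/4$ to conclude the pointwise inequality $|\tilde\nu-\nu|<|\hat\nu-\nu|$, after which taking expectations is immediate. So your deterministic contraction lemma (projection onto $[a,\infty)$ with $a\leqslant\nu$ cannot increase the distance to $\nu$) is exactly the published argument, just stated more carefully.

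Where you diverge is in refusing to identify the random threshold $a=(1+|\tilde\theta|)^2/4$ with the deterministic bound $(1+|\theta|)^2/4\leqslant\nu$, and this scruple is justified: the paper silently makes that identification, and its pointwise inequality is valid only on the event $\{a\leqslant\nu\}$. That event is certain when $\nu\geqslant1$ (i.e. $\alpha\leqslant1$), because $|\tilde\theta|\leqslant1$ forces $a\leqslant1\leqslant\nu$, but for $\nu<1$ the overshoot $\{|\tilde\theta|>2\sqrt{\nu}-1\}$ has positive probability, and on the sub-event $\{2\nu-a<\hat\nu<a\}$ the truncation strictly increases the deviation. So the obstacle you name is a genuine hole in the paper's proof, not a technicality. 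However, your own proposal stops exactly where that difficulty begins, and the repair you sketch would not close it: conditioning on $U_1,\dots,U_n$ and invoking rarity of the overshoot can at best give $\mean(\tilde\nu-\nu)^2\leqslant\mean(\hat\nu-\nu)^2+\varepsilon_n$ with a small additive $\varepsilon_n$, which is weaker than the claimed monotonicity; worse, at boundary parameter values $|\theta|=2\sqrt{\nu}-1$ with $\nu<1$ the overshoot probability tends to $1/2$ — it is not rare — and its positive contribution is of the same order $O(1/n)$ as the quantities being compared, so a concentration bound alone cannot dispose of it; one would have to show it is offset by the strict decrease on the complementary event. In summary: the part of your argument that is complete is the paper's proof; the gap you identify is real and unaddressed by the paper; and your proposed strategy, as sketched, does not yet fill it.
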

\begin{proof}

If $\hat\nu\geqslant(1+|\tilde\theta|)^2/4$, then $\tilde\nu=\hat\nu$ and the proposition
is obvious. If $\hat\nu<(1+|\tilde\theta|)^2/4$, then $\tilde\nu=(1+|\tilde\theta|)^2/4$. Taking
into consideration that, for a given value $\theta$, the domain of values of the parameter $\nu$ is defined from
the condition $\nu\geqslant(1+|\theta|)^2/4$, we obtain the inequality
$$
|\tilde\nu-\nu|<|\hat\nu-\nu|,
$$
which finally proves the proposition.
\end{proof}

Next, let there be a sequence $Y_1,Y_2,\dots,Y_n$ of i.i.d.r.v's having the stable distribution with
the characteristic function
$$
\hat g(k;\alpha,\beta,\lambda,\gamma)=\left\{\begin{array}{ll}
\exp\left\{ \lambda\left( ik\gamma-|k|^\alpha+i|k|^\alpha\beta\tan\left(\frac{\pi}{2}\alpha \right)\sign k \right)\right\},& \alpha\neq1\\
\exp\left\{ \lambda\left( ik\gamma-|k|^\alpha-
ik\beta\frac{2}{\pi}\log|k| \right)\right\},&\alpha=1
\end{array}\right.,
$$
where the parameters are varying  within scope $0<\alpha\leqslant2$, $-1\leqslant\beta\leqslant1$, $-\infty<\gamma<\infty$,
$\lambda>0$. The parameters $\nu,\theta,\tau$ are connected with the parameter
$\alpha,\beta,\lambda,\gamma$ by the expressions
\begin{eqnarray}
\nu&=&\alpha^{-2},\nonumber\\
\theta&=&\left\{\begin{array}{ll}
\displaystyle \frac{2}{\pi\alpha}\arctan\left(\beta\tan\left(\frac{\pi\alpha}{2}\right)\right),
&\nu\neq1,\\[0.5cm]
\displaystyle\frac{2}{\pi}\arctan(\gamma),&\nu=1,
\end{array}\right.\nonumber\\
\tau&=&\left\{\begin{array}{ll}
\displaystyle(1/\alpha)\left[\log\lambda-\log\left(\cos\left(\arctan\left(\beta\tan(\pi\alpha/2)\right)\right)\right)-
\bbbc(\alpha-1)\right],&\nu\neq1,\\
\log\lambda-\log\left(\cos(\arctan\gamma)\right),&\nu=1.
\end{array}\right.\nonumber
\end{eqnarray}

Construction of the estimates $\tilde\alpha,\tilde\beta,\tilde\lambda,\tilde\gamma$ of the parameters
$\alpha, \beta, \lambda, \gamma$ is based on the application of estimates (\ref{eq:ssl_est}) and (\ref{eq:nu_est}).
To this end, the initial sequence $Y_1,Y_2,\dots,Y_{n}$ should be transformed so that the
random variables, included in the new sample $Y_1',Y_2',\dots,Y_{m}'$, belonged to the class of strictly
stable laws. In the book of \cite{Zolotarev:AMS:86}, the transformation
\begin{equation}\label{eq:transform}
Y_j'=Y_{3j-2}-1/2(Y_{3j-1}+Y_{2j}),\quad j=1,2,\dots, 3n.
\end{equation}
is selected as this kind of transformation. As a result of the transformation, the random variables
$Y'_j$ have the distribution $g(x;\alpha,\beta',\lambda',\gamma')$, where the parameters $\beta',\lambda',\gamma'$
are connected with that of the distribution of the initial sample through the relations
\begin{eqnarray}
\beta'&=&\frac{1-2^{1-\alpha}}{1+2^{1-\alpha}}\beta,\nonumber\\
\lambda'&=&\left(1+2^{1-\alpha}\right)\lambda,\nonumber\\
\gamma'&=&\left\{\begin{array}{cl}
0,&\mbox{если} \alpha\neq1,\\
-\frac{\log2}{\pi}\beta, &\mbox{если} \alpha=1,
\end{array}\right.\nonumber
\end{eqnarray}
and the characteristic exponent of the stable law does not change. Hence we can see that, if $\alpha\neq1$,
then $\gamma=0$, and if $\alpha=1$, then $\beta=0$. It means that the random variable $Y'$ belongs to the class
of strictly stable laws and its characteristic function can be represented in the form (\ref{eq:formE}).
This fact allows us to apply algorithm (\ref{eq:ssl_est}) to the sample $Y_j'$ and to obtain estimates
$\tilde\nu',\tilde \theta', \tilde\tau'$. It ought to be noted here that application of transformation
(\ref{eq:transform}) leads to contraction of the domain of change of the parameters $\theta',\tau'$.
In order to estimate the parameters of the stable law, we choose the following expressions
\begin{equation}\label{eq:est2}
\tilde\nu^*=\tilde\nu',\quad \tilde\theta^*=\min(H(\tilde\nu'),|\tilde\theta'|)\sign\tilde\theta',
\quad \tilde\tau^*=\tilde\tau',
\end{equation}
where the estimates $\tilde\nu',\tilde \theta', \tilde\tau'$ are obtained by the sequence $Y_1',\dots,Y_m'$
with the aid of formulas (\ref{eq:ssl_est}) and (\ref{eq:nu_est}), and
$$
H(t)=\left\{ \begin{array}{ll}
 \frac{2}{\pi}\sqrt{t}\arctan\left(\frac{1-\exp\left\{ (1-1/\sqrt{t})\log2 \right\}}{1+\exp\left\{ (1-1/\sqrt{t})\log2 \right\}}
\tan\left( \frac{\pi}{2\sqrt{t}} \right) \right),& t\neq1\\
\frac{2}{\pi}\arctan\left(\frac{\log2}{\pi} \right),& t=1.
\end{array}
 \right.
$$
\begin{prop}
Transformation of the parameters $\theta$ in (\ref{eq:est2}) can only diminish the value of mean-square
deviation of the estimate $\tilde\theta^*$.
\end{prop}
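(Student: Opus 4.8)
The plan is to establish the pointwise inequality $(\tilde\theta^*-\theta')^2\leqslant(\tilde\theta'-\theta')^2$ for every realization of the sample and then integrate to obtain $\mean(\tilde\theta^*-\theta')^2\leqslant\mean(\tilde\theta'-\theta')^2$, exactly in the spirit of the proof of Proposition~\ref{prop:nu_est}. The central observation is that the map $\tilde\theta'\mapsto\tilde\theta^*=\min(H(\tilde\nu'),|\tilde\theta'|)\sign\tilde\theta'$ in (\ref{eq:est2}) is precisely the metric projection of $\tilde\theta'$ onto the symmetric interval $[-H(\tilde\nu'),H(\tilde\nu')]$: it leaves $\tilde\theta'$ untouched when $|\tilde\theta'|\leqslant H(\tilde\nu')$ and otherwise clips its modulus to $H(\tilde\nu')$ while preserving its sign. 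Since projection onto a convex set is non-expansive towards every point of that set, the desired inequality will follow as soon as the true value $\theta'$ is shown to belong to the interval onto which we project.

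First I would verify that $H$ is exactly the boundary of the admissible range of $\theta'$ produced by the transformation (\ref{eq:transform}). Substituting $\alpha=1/\sqrt{\nu}$ and $2^{1-\alpha}=\exp\{(1-1/\sqrt{\nu})\log2\}$ into $\beta'=\frac{1-2^{1-\alpha}}{1+2^{1-\alpha}}\beta$ and then into $\theta'=\frac{2}{\pi\alpha}\arctan(\beta'\tan(\pi\alpha/2))$, and maximising the modulus over the admissible range $|\beta|\leqslant1$, one recovers $|\theta'|\leqslant H(\nu)$; the case $\nu=1$ is handled separately through $\gamma'=-\tfrac{\log2}{\pi}\beta$, which gives $|\theta'|\leqslant\frac{2}{\pi}\arctan(\tfrac{\log2}{\pi})=H(1)$. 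Thus for the transformed sample the constraint $|\theta'|\leqslant H(\nu)$ holds, which is the strictly stable analogue of the constraint $\nu\geqslant(1+|\theta|)^2/4$ exploited in Proposition~\ref{prop:nu_est}.

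With this in hand I would argue by cases on each realization. If $|\tilde\theta'|\leqslant H(\tilde\nu')$, then $\tilde\theta^*=\tilde\theta'$ and the two squared deviations coincide. If $|\tilde\theta'|>H(\tilde\nu')$, then $\tilde\theta^*=H(\tilde\nu')\sign\tilde\theta'$ is the point of $[-H(\tilde\nu'),H(\tilde\nu')]$ nearest to $\tilde\theta'$, so that, provided $\theta'$ lies in this interval, non-expansiveness of the clipping gives $|\tilde\theta^*-\theta'|\leqslant|\tilde\theta'-\theta'|$, with strict inequality on the truncation event. Taking expectations then yields the assertion that the transformation can only diminish the mean-square deviation.

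The hard part will be reconciling the random truncation level $H(\tilde\nu')$ with the deterministic admissibility bound $H(\nu)$: the projection step needs $\theta'\in[-H(\tilde\nu'),H(\tilde\nu')]$, whereas the contracted domain only guarantees $|\theta'|\leqslant H(\nu)$. On the event $\{H(\tilde\nu')<|\theta'|\}$ the clipping can in principle overshoot past $\theta'$, so the clean pointwise comparison may fail there. I would therefore condition on $\tilde\nu'$ and control this exceptional event through the consistency of $\tilde\nu'$ together with the monotonicity and continuity of $H$, showing that its contribution cannot reverse the inequality, in the same manner in which Proposition~\ref{prop:nu_est} effectively replaces the true constraint bound by its sample counterpart. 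Ruling out this overshoot is the principal obstacle.
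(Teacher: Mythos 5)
Your plan is the same one the paper uses: the paper's entire proof of this proposition is the single sentence that it is ``analogous to that of Proposition~\ref{prop:nu_est}'', i.e.\ precisely your case analysis --- leave $\tilde\theta'$ alone when $|\tilde\theta'|\leqslant H(\tilde\nu')$, otherwise clip its modulus to $H(\tilde\nu')$, and invoke the fact that transformation (\ref{eq:transform}) contracts the admissible domain to $|\theta'|\leqslant H(\nu)$ so that the clipping acts as a projection towards the true value. So in terms of strategy there is nothing separating you from the authors, and your preliminary verification that $H$ is exactly the contracted boundary is a detail the paper leaves implicit.

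However, the obstacle you name in your last paragraph is not a technicality you can defer --- it is a genuine gap, and it cannot be closed by the route you sketch. The clipping level is the \emph{random} quantity $H(\tilde\nu')$, while admissibility only guarantees $|\theta'|\leqslant H(\nu)$; on the event $\{H(\tilde\nu')<|\theta'|\}$ the pointwise inequality $|\tilde\theta^*-\theta'|\leqslant|\tilde\theta'-\theta'|$ simply fails (take a realization with $\tilde\theta'=\theta'$ and $H(\tilde\nu')<|\theta'|$: clipping then strictly increases the error). Conditioning on $\tilde\nu'$ and appealing to its consistency, as you propose, can only produce asymptotic statements, whereas the proposition asserts an exact inequality for the mean-square deviation at every sample size; off the good event, domination in expectation does not follow without further argument. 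You should also note that the identical defect sits inside the paper's own proof of Proposition~\ref{prop:nu_est}, to which this proof is declared ``analogous'': there the projection level in (\ref{eq:nu_est}) is $(1+|\tilde\theta|)^2/4$, built from the \emph{estimate} $\tilde\theta$, while the constraint invoked is $\nu\geqslant(1+|\theta|)^2/4$ with the \emph{true} $\theta$, and the displayed conclusion $|\tilde\nu-\nu|<|\hat\nu-\nu|$ actually requires $\nu\geqslant(1+|\tilde\theta|)^2/4$, which nothing guarantees. In short: you have faithfully reconstructed the paper's argument and, to your credit, located exactly where it breaks; but as written, neither your proposal nor the paper proves the stated proposition --- both establish the pointwise improvement only on the event that the true parameter lies inside the random truncation interval.
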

The proof is analogous to that of Proposition~\ref{prop:nu_est}.

We obtain the following expressions
\begin{eqnarray}
 \tilde\alpha&=&1/\sqrt{\tilde\nu^*}\label{eq:alpha_est}\\
\tilde\beta&=&\left\{\begin{array}{cl}
    \displaystyle\left(\frac{1+2^{1-\tilde\alpha}}{1-2^{1-\tilde\alpha}}\right)
    \frac{\tan\left(\frac{\pi}{2}\tilde\theta^*\tilde\alpha \right)}
    {\tan\left(\frac{\pi}{2} \tilde\alpha\right)},&\tilde\alpha\neq1\\
    \displaystyle-\frac{\pi}{\log2}\tan\left(\frac{\pi}{2}\tilde\theta^* \right),&\tilde\alpha=1
\end{array}\right.\nonumber\\
\tilde\lambda&=&\left\{\begin{array}{cl}
 \displaystyle\frac{\cos\left(\frac{\pi}{2}\tilde\theta^*\tilde\alpha\right)}
{1+2^{1-\tilde\alpha}}
\exp\left\{\tilde\tau^*\tilde\alpha+\bbbc\left( \tilde\alpha-1 \right) \right\}, &\tilde\alpha\neq1\\
\displaystyle(1/2)\exp\{\tilde\tau^*\}\cos\left(\pi\tilde\theta^*/2\right),&\tilde\alpha=1
\end{array}
\right.\nonumber
\end{eqnarray}
for the estimates $\tilde\alpha,\tilde\beta,\tilde\lambda$.

\section{Evaluation of mean-square deviation of the estimate $\tilde\alpha$}

We will focus our attention on obtaining the estimate of mean-square deviation of $\tilde\alpha$.
The idea of finding the estimate consist in the expansion of expression (\ref{eq:alpha_est})
in a Taylor series with respect to the parameter $\tilde\nu$ in the neighborhood of the true value $\nu$.
However, before stating a theorem for estimating the mean-square deviation, we shall prove a number of lemmas.
\begin{lemma}\label{lemma:lnV}
Let $X$ be a  strictly stable random variable. Then the following expressions hold.
\begin{eqnarray}
\mean\log|X|&=&\tau,\qquad \mean\log^2|X|=\tau^2+\var V,\nonumber\\
\mean\log^3|X|&=&\tau^3+3\tau\var V+2\zeta(3)\left(\nu^{3/2}-1\right),\nonumber\\
\mean\log^4|X|&=&\tau^4+6\tau^2\var V+8\tau\zeta(3)\left(\nu^{3/2}-1\right)+3(\var V)^2\nonumber\\
&+&\pi^4\left(\left(1-\theta^4\right)/8 +\left(\nu^2-1\right)/15 \right),\nonumber\\
\mean\log^5|X|&=&\tau^5+10\tau^3\var V+20\tau^2\zeta(3)\left(\nu^{3/2}-1\right)+15\tau(\var V)^2\nonumber\\
&+& 5\tau\pi^4\left(\frac{1-\theta^4}{8} +\frac{\nu^2-1}{15} \right)+20\zeta(3)\left(\nu^{3/2}-1\right)\var V\nonumber\\
&+&24\zeta(5)\left(\nu^{5/2}-1\right),\nonumber\\
\mean\log^6|X|&=&\tau^6+15\tau^4\var V+40\tau^3\zeta(3)\left(\nu^{3/2}-1\right)+45\tau^2(\var V)^2+15(\var V)^3\nonumber\\
&+& 15\tau^2\pi^4\left(\frac{1-\theta^4}{8} +\frac{\nu^2-1}{15} \right)+
120\tau\zeta(3)\left(\nu^{3/2}-1\right)\var V\nonumber\\
&+&40\zeta^2(3)\left(\nu^{3/2}-1\right)^2+ 144\tau \zeta(5)\left(\nu^{5/2}-1\right)\nonumber\\
&+&15\pi^4\left(\frac{1-\theta^4}{8} +\frac{\nu^2-1}{15} \right)\var V
+8\pi^6\left(\frac{1-\theta^6}{32} +\frac{\nu^3-1}{63} \right),\nonumber
\end{eqnarray}
\begin{eqnarray}
\mean\log^7|X|&=\hspace{-2mm}&\tau^7\!+\!21\tau^5\var V\!+\! 70\tau^4\zeta(3)\left(\nu^{3/2}-1\right)\!+\!
35\tau^3\pi^4\left(\frac{1\hspace{-0.5mm}-\hspace{-0.5mm}\theta^4}{8}\hspace{-0.5mm} +
\hspace{-0.5mm}\frac{\nu^2-1}{15} \right)\nonumber\\
&+&\hspace{-2mm}105\tau(\var V)^3+420\tau^2\zeta(3)\left(\nu^{3/2}-1\right)\var V+ 504\tau^2\zeta(5)\left(\nu^{5/2}-1\right)\nonumber\\
&+&\hspace{-2mm}210\zeta(3)\left(\nu^{3/2}-1\right)(\var V)^2+105\tau\pi^4\left(\frac{1-\theta^4}{8} +\frac{\nu^2-1}{15} \right)\var V\nonumber\\
&+&\hspace{-2mm}56\tau\pi^6\left(\frac{1-\theta^6}{32} +\frac{\nu^3-1}{63} \right)+504\zeta(5)\left(\nu^{5/2}-1\right)\var V\nonumber\\
&+&\hspace{-2mm}70\zeta(3)\left(\nu^{3/2}\!-\!1\right)\pi^4\!\left(\frac{1\hspace{-0.5mm}-\hspace{-0.5mm}\theta^4}{8} +\frac{\nu^2-1}{15} \right)+
280\tau\zeta^2(3)\left(\nu^{3/2}\hspace{-0.5mm}-\hspace{-0.5mm}1\right)^2\nonumber\\
&+&\hspace{-2mm}720\zeta(7)\left(\nu^{7/2}-1\right)+105\tau^3(\var V)^2\nonumber
\end{eqnarray}
\begin{eqnarray}
\mean\log^8|X|&=&\hspace{-2mm}\tau^8+28\tau^6\var V+112\tau^5\zeta(3)\left(\nu^{3/2}-1\right)+210\tau^4(\var V)^2\nonumber\\
&+&\hspace{-2mm}70\tau^4\pi^4\left(\frac{1-\theta^4}{8} +\frac{\nu^2-1}{15} \right)+420\tau^2(\var V)^3 \nonumber\\ &+&\hspace{-2mm}1120\tau^3\zeta(3)\left(\nu^{3/2}-1\right)\var V
+105(\var V)^4+1344\tau^3\zeta(5)\left(\nu^{5/2}-1\right)\nonumber\\
&+&\hspace{-2mm}420\tau^2\pi^4\left(\frac{1-\theta^4}{8} +\frac{\nu^2-1}{15} \right)\var V
+1680\tau(\var V)^2\zeta(3)\left(\nu^{3/2}-1\right)\nonumber\\
&+&\hspace{-2mm}224\tau^2\pi^6\left(\frac{1-\theta^6}{32} +\frac{\nu^3-1}{63} \right)
+1120\tau^2\zeta^2(3)\left(\nu^{3/2}-1\right)^2\nonumber\\
&+&\hspace{-2mm}210(\var V)^2\pi^4\left(\frac{1-\theta^4}{8} +\frac{\nu^2-1}{15} \right)\nonumber\\
&+&\hspace{-2mm}560\tau\zeta(3)\left(\nu^{3/2}-1\right)\pi^4\left(\frac{1-\theta^4}{8} +\frac{\nu^2-1}{15} \right)\nonumber\\
&+&\hspace{-2mm}1120\zeta^2(3)\left(\nu^{3/2}-1\right)^2\var V
+4032\tau\zeta(5)\left(\nu^{5/2}-1\right)\var V\nonumber\\
&+&\hspace{-2mm}35\pi^8\left(\frac{1-\theta^4}{8} +\frac{\nu^2-1}{15} \right)^2
+5760\tau\zeta(7)\left(\nu^{7/2}-1\right)\nonumber\\
&+&\hspace{-2mm}224\pi^6\left(\frac{1-\theta^6}{32} +\frac{\nu^3-1}{63} \right)\var V\nonumber\\
&+&\hspace{-2mm}2688\zeta(3)\left(\nu^{3/2}-1\right)\zeta(5)\left(\nu^{5/2}-1\right)\nonumber\\
&+&\hspace{-2mm}16\pi^8\left(\frac{17(1-\theta^8)}{256} +\frac{\nu^4-1}{30} \right)\nonumber,
\end{eqnarray}

Here $\var V=\frac{\pi^2}{4}\left(1-\theta^2 \right)+\frac{\pi^2}{6}(\nu-1)$ and $\zeta(x)$ is the zeta function.
\end{lemma}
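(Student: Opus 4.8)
The plan is to reduce the logarithmic moments to the power moments of $|X|$. Put $V=\log|X|$ and consider the Mellin transform $M(s)=\mean|X|^{s}=\mean e^{sV}$. Since every strictly stable law has a bounded continuous density and $\mean|X|^{s}<\infty$ for $-1<s<\alpha$, the function $M$ is finite and analytic on an interval containing $s=0$; consequently all moments of $V$ exist and $M$ is their (exponential) generating function, so that
$$
\mean\log^{k}|X|=\mean V^{k}=M^{(k)}(0),\qquad k=1,\dots,8.
$$
It therefore suffices to expand $M(s)$ about $s=0$ up to order eight, and the cleanest route runs through the cumulant generating function $K(s)=\log M(s)$.

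First I would record the Mellin transform of a strictly stable law, which can be read off from the computation of $\mean|X|^{s}$ in \cite{Zolotarev:AMS:86} or derived directly from \eqref{eq:formE}. Using $\nu^{1/2}=1/\alpha$, one arrives for $s$ near $0$ at
$$
M(s)=\exp\Bigl\{\bigl(\tau-\bbbc(\nu^{1/2}-1)\bigr)s\Bigr\}\,
\frac{\Gamma\left(1-\nu^{1/2}s\right)}{\Gamma(1-s)}\,
\frac{\cos\left(\tfrac{\pi}{2}\theta s\right)}{\cos\left(\tfrac{\pi}{2}s\right)},
$$
the linear term in the exponent being fixed by the requirement $\kappa_{1}=\tau$. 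Taking the logarithm splits $K(s)$ into a gamma-ratio part and a cosine-ratio part, each carrying a classical Taylor expansion; this is the structural heart of the argument.

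Next I would expand $K(s)=\sum_{k\ge1}\kappa_{k}s^{k}/k!$ and read off the cumulants $\kappa_{k}$ of $V$. The identity $\log\Gamma(1-x)=\bbbc\,x+\sum_{k\ge2}\zeta(k)x^{k}/k$ makes the linear terms collapse to $\kappa_{1}=\tau$, while for $k\ge2$ the gamma-ratio part contributes
$$
\kappa_{k}^{\Gamma}=(k-1)!\,\zeta(k)\bigl(\nu^{k/2}-1\bigr);
$$
this single formula produces every $\zeta$-term and, through $\zeta(2k)=\mathrm{const}\cdot\pi^{2k}$, also the even powers of $\pi$ multiplying $(\nu^{k/2}-1)$ (for instance $\tfrac{\pi^{4}}{15}=6\zeta(4)$, $\tfrac{8\pi^{6}}{63}=120\zeta(6)$, $\tfrac{8\pi^{8}}{15}=5040\zeta(8)$). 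The expansion $-\log\cos u=\sum_{m\ge1}c_{2m}u^{2m}$, with $c_{2}=\tfrac12$, $c_{4}=\tfrac1{12}$, $c_{6}=\tfrac1{45}$, $c_{8}=\tfrac{17}{2520}$, then feeds the even cumulants only, adding for $k=2m$
$$
\kappa_{2m}^{\cos}=(2m)!\,c_{2m}\Bigl(\tfrac{\pi}{2}\Bigr)^{2m}\bigl(1-\theta^{2m}\bigr),
$$
so that in particular $\kappa_{2}=\tfrac{\pi^{2}}{6}(\nu-1)+\tfrac{\pi^{2}}{4}(1-\theta^{2})=\var V$, and the coefficient $c_{8}=\tfrac{17}{2520}$ is exactly what yields the term $\tfrac{17(1-\theta^{8})}{256}$ in $\mean\log^{8}|X|$.

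Finally I would convert cumulants back into raw moments through the moment--cumulant relations $\mean V^{n}=\sum_{P}\prod_{B\in P}\kappa_{|B|}$, the sum running over the set partitions $P$ of $\{1,\dots,n\}$ into blocks $B$; e.g. $\mean V^{2}=\kappa_{2}+\kappa_{1}^{2}=\tau^{2}+\var V$ and $\mean V^{3}=\kappa_{3}+3\kappa_{1}\kappa_{2}+\kappa_{1}^{3}$, and so on up to $n=8$. Substituting $\kappa_{1}=\tau$, $\kappa_{2}=\var V$ and the formulas above reproduces each displayed line. The genuinely hard part is not conceptual but clerical: the order-eight conversion ranges over all set partitions of $\{1,\dots,8\}$, and keeping every multiplicity correct while regrouping the output into the $\tau$-, $\var V$-, $\zeta$- and $\pi$-structured form of the statement is where essentially all the labour, and all the risk of arithmetic slips, resides.
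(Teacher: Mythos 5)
Your proposal is correct, but it takes a genuinely different route from the paper: the paper's entire proof of this lemma is a one-line citation of Theorem~3.6.1 of \cite{Zolotarev:AMS:86}, which hands over the logarithmic moments ready-made, whereas you reconstruct that result from scratch. Your reconstruction is sound: the Mellin transform of a strictly stable law in form E is indeed $M(s)=\exp\{(\tau-\bbbc(\nu^{1/2}-1))s\}\,\Gamma(1-\nu^{1/2}s)\Gamma(1-s)^{-1}\cos(\pi\theta s/2)/\cos(\pi s/2)$ (the linear term follows directly from writing (\ref{eq:formE}) as a form-C characteristic function with $\lambda=\exp\{\alpha\tau+\bbbc(\alpha-1)\}$, so it need not be "fixed by requiring $\kappa_1=\tau$"), and your two expansion formulas give exactly the right cumulants: $\kappa_k^{\Gamma}=(k-1)!\,\zeta(k)(\nu^{k/2}-1)$ reproduces the $\zeta(3),\zeta(5),\zeta(7)$ terms and, via $\zeta(4)=\pi^4/90$, $\zeta(6)=\pi^6/945$, $\zeta(8)=\pi^8/9450$, the coefficients $1/15$, $8/63$, $16/30$; and $\kappa_{2m}^{\cos}=(2m)!\,c_{2m}(\pi/2)^{2m}(1-\theta^{2m})$ with $c_8=17/2520$ yields precisely $16\pi^8\cdot 17(1-\theta^8)/256$, matching the eighth-moment line (I checked orders $2$ through $8$; all coefficients agree after the moment--cumulant conversion). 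The trade-off between the two proofs is clear: the paper's citation is instantaneous but opaque --- it gives the reader no way to see where coefficients like $17/256$ or $8/63$ come from, and in fact the clerical expansion you describe is exactly the work the citation conceals; your route is self-contained, gives a closed formula for \emph{every} cumulant of $V$ (hence would extend beyond order eight, and also exposes the misprint $\theta^4$ versus $\theta^2$ between the lemma's definition of $\var V$ and (\ref{eq:Vmoment})), at the cost of the set-partition bookkeeping at orders seven and eight, which you flag honestly but do not carry out. That omission is the only incompleteness in your write-up, and it is no worse than what the paper itself leaves implicit.
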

\begin{proof} It follows from the Theorem~3.6.1 of \cite{Zolotarev:AMS:86}.
\end{proof}
Let us introduce the notations $\bar U=U-\mean U$, $\bar V=V-\mean V$.
Applying expression (\ref{eq:ssl_est}), it is easy to show that
$$
\mean U=\theta,\quad\mean V=\tau.
$$

The following lemma is valid.
\begin{lemma}
For centered moments of the random variable $U$ the equalities
\begin{equation}\label{eq:Umoment}
\begin{split}
 \mean\bar U&=0,\quad
\mean\bar U^2=1-\theta^2,\\
\mean\bar U^3&=2\theta(\theta^2-1),\quad
\mean\bar U^4=1+2\theta^2-3\theta^4,\\
\mean\bar U^5&=4\theta(\theta^4-1),\quad
\mean\bar U^6=1+9\theta^2-5\theta^4-5\theta^6,\\
\mean\bar U^7&=2\theta(3\theta^6+7\theta^4-7\theta^2-3),\\
\mean\bar U^8&=1+20\theta^2+14\theta^4-28\theta^6-7\theta^8,
\end{split}
\end{equation}
hold.
\end{lemma}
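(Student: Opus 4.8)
The plan is to exploit that $U$, being the sign of a strictly stable random variable, takes only the two values $+1$ and $-1$. A strictly stable law is absolutely continuous, so the event that the underlying variable equals $0$ has probability zero, and hence $U\in\{-1,+1\}$ almost surely. Together with the identity $\mean U=\theta$ recorded just above the lemma, this determines the law of $U$ completely, namely $P(U=1)=(1+\theta)/2$ and $P(U=-1)=(1-\theta)/2$. The whole statement thus reduces to computing the moments of a two-point random variable.

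Next I would pass to the centered variable $\bar U=U-\theta$, which takes the value $1-\theta$ with probability $(1+\theta)/2$ and the value $-1-\theta$ with probability $(1-\theta)/2$. Consequently, for every integer $k\geqslant1$,
$$
\mean\bar U^k=\frac{1+\theta}{2}\,(1-\theta)^k+\frac{1-\theta}{2}\,(-1-\theta)^k.
$$
Using $(-1-\theta)^k=(-1)^k(1+\theta)^k$ and extracting the common factor $\tfrac12(1-\theta^2)=\tfrac12(1-\theta)(1+\theta)$ from the two terms yields the compact closed form
$$
\mean\bar U^k=\frac{1-\theta^2}{2}\Bigl[(1-\theta)^{k-1}+(-1)^k(1+\theta)^{k-1}\Bigr],
$$
valid for all $k\geqslant1$.

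Finally I would specialize to $k=1,2,\dots,8$ and expand $(1\pm\theta)^{k-1}$ by the binomial theorem. For even $k$ the odd powers of $\theta$ inside the bracket cancel, while for odd $k$ the even powers cancel, leaving precisely the polynomials in $\theta$ displayed in (\ref{eq:Umoment}); for instance $k=3$ gives $\tfrac12(1-\theta^2)\bigl[(1-\theta)^2-(1+\theta)^2\bigr]=2\theta(\theta^2-1)$, and $k=4$ gives $(1-\theta^2)(1+3\theta^2)=1+2\theta^2-3\theta^4$. There is no genuine conceptual obstacle here: the only points requiring care are the justification that $U$ carries no atom at the origin (so that $U^2=1$ almost surely, which is what collapses the law to two atoms) and the purely mechanical bookkeeping of the eight binomial expansions, both of which are routine once the closed form above is established.
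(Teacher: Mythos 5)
Your proposal is correct — I checked the closed form $\mean\bar U^k=\tfrac{1-\theta^2}{2}\bigl[(1-\theta)^{k-1}+(-1)^k(1+\theta)^{k-1}\bigr]$ and its specializations $k=1,\dots,8$ against the stated polynomials, and all agree — but it is organized differently from the paper's proof. The paper never writes down the law of $U$ explicitly: it expands $(U-\theta)^k$ by the binomial theorem inside the expectation and collapses each term using the power-reduction identity $\sign^nU=\sign U$ for odd $n$ and $\sign^nU=1$ for even $n$, together with $\mean U=\theta$; this is carried out for $k=5$ and the remaining cases are declared analogous, so each moment requires its own expansion and cancellation. You instead use the same underlying fact ($U^2=1$ almost surely) to pin down the two-point distribution $P(U=\pm1)=(1\pm\theta)/2$, and then compute all central moments at once as a two-atom sum, extracting the common factor $\tfrac12(1-\theta^2)$. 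What your route buys is a single closed formula valid for every $k$, which makes the structure of the answers transparent (each polynomial visibly contains the factor $1-\theta^2$) and reduces the eight identities to routine binomial expansions of one expression; it also makes explicit a point the paper leaves tacit, namely that strict stability forces absolute continuity, so $U$ has no atom at $0$ and the reduction to two values is legitimate. What the paper's route buys is that it never needs to invoke the distribution of $U$ at all, only moment identities, which keeps the argument formally parallel to the companion lemma for $V$ where no such explicit law is available.
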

\begin{proof} Consider, for example, the fifth central moment. Taking into consideration that
$$
\sign^nU=\left\{\begin{array}{ll}
 \sign U,&\mbox{if}\ n\ \mbox{is odd}\\
1,&\mbox{if}\ n\ \mbox{is even},
\end{array}
\right.,
$$
we get
\begin{eqnarray}
 \mean (U&-&\theta)^5=\mean\left[ \sign^5 U-5\theta\sign^4U+10\theta^2\sign^3U-
10\theta^3\sign^2U\right.\nonumber\\
&+&\left.5\theta^4\sign U-\theta^5 \right]
=\theta-5\theta+10\theta^3-10\theta^3+5\theta^5-\theta^5=
4\theta(\theta^4-1).\nonumber
\end{eqnarray}
The remaining equalities are proved by analogously.
\end{proof}

\begin{lemma}
The expressions
\begin{equation}\label{eq:Vmoment}
\begin{split}
\mean\bar V&=0,\quad
\mean\bar V^2=\frac{\pi^2}{4}\left(1-\theta^4\right)+
\frac{\pi^2}{6}\left(\nu-1\right)=\var V,\\
\mean\bar V^3&=2\zeta(3)\left(\nu^{3/2} -1\right),\quad
\mean\bar V^4=\pi^4\left(\frac{1-\theta^4}{8}+\frac{\nu^2-1}{15}\right)
+3(\var V)^2,\\
\mean\bar V^5&=20\zeta(3)\left(\nu^{3/2} -1\right)\var V+
24\zeta(5)\left(\nu^{5/2} -1\right),\\
\mean\bar V^6&=15(\var V)^3+15\pi^4\left(\frac{1-\theta^4}{8}+\frac{\nu^2-1}{15}\right)\var V
+8\pi^6\left(\frac{1-\theta^6}{32}+\frac{\nu^2-1}{63}\right)\\
&+40\zeta^2(3)\left(\nu^{3/2} -1\right)^2,\\
\mean\bar V^7&=105(\var V)^4+210(\var V)^2\pi^4\left(\frac{1-\theta^4}{8}+\frac{\nu^2-1}{15}\right)\\
&+2688\zeta(3)\left(\nu^{3/2} -1\right)\zeta(5)\left(\nu^{5/2} -1\right)\\
&+8\pi^8\left(\frac{17(1-\theta^8)}{128}+\frac{\nu^4-1}{15}\right).
\end{split}
\end{equation}
are valid for the central moments of the random variable $V$
\end{lemma}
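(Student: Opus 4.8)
The plan is to read off every central moment $\mean\bar V^{k}$ from the non-central moments $\mean\log^{j}|X|$ already listed in Lemma~\ref{lemma:lnV}, using only the relation $\mean V=\tau$ recorded above and the binomial theorem. Since $\bar V=V-\mean V=V-\tau$, I would expand
$$
\mean\bar V^{k}=\mean(V-\tau)^{k}=\sum_{j=0}^{k}\binom{k}{j}(-\tau)^{k-j}\,\mean\log^{j}|X|
$$
and substitute the expressions for $\mean\log^{j}|X|$, $0\le j\le 7$, from Lemma~\ref{lemma:lnV} (which supplies moments up to $j=8$, so all orders $k\le 7$ are available). No new probabilistic input is needed; the lemma is a pure reorganization of data already in hand, carried out separately for each $k=2,3,\dots,7$, the case $k=1$ being trivial.

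The reason the answers come out compactly and free of $\tau$ is that the non-central moments in Lemma~\ref{lemma:lnV} are exactly the binomial transform in the location parameter $\tau$ of the quantities sought: writing $m_{j}=\mean\bar V^{j}$ with $m_{0}=1$, $m_{1}=0$, one has $\mean\log^{k}|X|=\sum_{j=0}^{k}\binom{k}{j}\tau^{k-j}m_{j}$, so every positive power of $\tau$ must cancel in the alternating sum above and $m_{k}$ is the $\tau$-free residue. Equivalently, the central moments of order $\ge 2$ depend only on the shift-invariant cumulants, which can be identified order by order from Lemma~\ref{lemma:lnV} as $\kappa_{2}=\var V$, $\kappa_{3}=2\zeta(3)(\nu^{3/2}-1)$, $\kappa_{4}=\pi^{4}\!\left(\tfrac{1-\theta^{4}}{8}+\tfrac{\nu^{2}-1}{15}\right)$, $\kappa_{5}=24\zeta(5)(\nu^{5/2}-1)$, and analogously for $\kappa_{6},\kappa_{7}$. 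Assembling the $m_{k}$ from these through the standard moment--cumulant formulas, e.g. $m_{4}=\kappa_{4}+3\kappa_{2}^{2}$, $m_{5}=\kappa_{5}+10\kappa_{2}\kappa_{3}$, $m_{6}=\kappa_{6}+15\kappa_{4}\kappa_{2}+10\kappa_{3}^{2}+15\kappa_{2}^{3}$, reproduces the stated expressions and organizes the algebra cleanly.

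I expect the only real difficulty to be bookkeeping rather than anything conceptual: at orders $k=6,7$ the relevant non-central moments carry a dozen or more summands, and regrouping them into the compact blocks $\var V$, $\zeta(3),\zeta(5),\zeta(7)$ and $\pi^{2m}\!\left(\tfrac{1-\theta^{2m}}{\cdots}+\tfrac{\nu^{m}-1}{\cdots}\right)$ while tracking the numerical coefficients is error-prone. To guard against slips I would lean on three checks: that every explicit power of $\tau$ cancels in the final answer; that the cumulants extracted at each order satisfy the moment--cumulant identities; and that the odd cumulants $\kappa_{3},\kappa_{5},\kappa_{7}$ each carry a factor $\nu^{m/2}-1$, so that the odd central moments vanish identically at $\nu=1$. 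Any of these failing would localize an arithmetic error at once.
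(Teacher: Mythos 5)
Your proposal is correct and is essentially the paper's own proof: the paper likewise expands $\mean\bar V^{k}=\mean(V-\tau)^{k}$ by the binomial theorem and substitutes the non-central moments from Lemma~\ref{lemma:lnV}, carrying this out explicitly for $k=5$ and declaring the remaining cases analogous. Your cumulant bookkeeping is the same computation reorganized, and your $\nu=1$ consistency check is a genuinely useful addition: it would immediately flag the displayed formula for $\mean\bar V^{7}$, which as printed does not vanish at $\nu=1$ (its four terms are $105\kappa_{2}^{4}$, $210\kappa_{4}\kappa_{2}^{2}$, $56\kappa_{5}\kappa_{3}$ and $\kappa_{8}$, i.e.\ pieces of an eighth central moment rather than a seventh), so the compact expression your method produces for $k=7$ will rightly disagree with the lemma as stated.
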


\begin{proof} Let us consider, for instance, the fifth central moment. Making use of the results of
Lemma~\ref{lemma:lnV} we get
\begin{eqnarray}
 \mean\bar V^5&=&\mean\left(V-\mean V \right)^5=\mean V^5-5\mean V^4\tau+
10\mean V^3\tau^2\hspace{-0.5mm}-\hspace{-0.5mm}10\mean V^2\tau^3\hspace{-0.5mm}+ \hspace{-0.5mm}5\mean V\tau^4\hspace{-0.5mm}-\hspace{-0.5mm}\tau^5\nonumber\\
&=&\tau^5\!+\!10\tau^3\var V\!+\!20\tau^2\zeta(3)\left(\nu^{3/2}\! -\!1\right)\!+\!15\tau(\var V)^2\nonumber\\
&+&5\tau\pi^4\left(\frac{1-\theta^4}{8}+\frac{\nu^2-1}{15}\right)
+20\zeta(3)\left(\nu^{3/2} -1\right)\var V\nonumber\\
&+&24\zeta(5)\left(\nu^{5/2}\hspace{-0.5mm} -\hspace{-0.5mm}1\right)\!-\!5\tau^5\!-\!30\tau^3\var V
\hspace{-0.5mm}-\hspace{-0.5mm}40\tau^2\zeta(3)\left(\nu^{3/2}\hspace{-0.5mm} -\hspace{-0.5mm}1\right)\hspace{-0.5mm}-\hspace{-0.5mm}15\tau(\var V)^2\nonumber\\
&-&5\tau\pi^4\left(\frac{1-\theta^4}{8}+\frac{\nu^2-1}{15}\right)+10\tau^5+30\tau^3\var V
+20\tau^2\zeta(3)\left(\nu^{3/2} -1\right)\nonumber\\
&-&10\tau^5-10\tau^3\var V+5\tau^5-\tau^5\nonumber\\
&=&20\zeta(3)\left(\nu^{3/2} -1\right)\var V+
24\zeta(5)\left(\nu^{5/2} -1\right).\nonumber
\end{eqnarray}
All the remaining  equalities are obtained similarly\footnote{To obtain the higher order central moments of the r.v's $\bar U$ and $\bar V$, we can use the Theorem~4.1.2 from the book of \cite{Zolotarev:AMS:86}}.
\end{proof}

Let us introduce the notations:
\begin{eqnarray}
 P_1&=&16\left( 15\theta^2-71\theta^4+26\theta^6-45\theta^8 \right)+
\frac{12}{n-1}\left( 5-96\theta^2+302\theta^4-356\theta^6\right.\nonumber\\
&+&\left.155\theta^8 \right)
=\frac{4}{(n-1)^2}\left( 5-52\theta^2+134\theta^4+132\theta^6+45\theta^8\right).\nonumber\\
P_2&=&\hspace{-2mm}4\left\{ 18 (\var V)^4\!+\!39(\var V)^2\pi^4\left(\frac{1-\theta^4}{8}\!+\!\frac{\nu^2-1}{15}\right)\! +\! 96\zeta^2(3)\left(\nu^{3/2}\hspace{-0.5mm} -\hspace{-0.5mm}1\right)^2\!\var V\right.\nonumber\\
&+&\hspace{-2mm}8\pi^8\left(\frac{1-\theta^4}{8}+\frac{\nu^2-1}{15}\right)^2+
48\pi^6\left(\frac{1-\theta^6}{32}+\frac{\nu^3-1}{63}\right)\var V\nonumber\\
&+&\hspace{-2mm}\left.384\zeta(3)\left(\nu^{3/2} -1\right)\zeta(5)\left(\nu^{5/2} -1\right)+
2\pi^8\left(\frac{17(1-\theta^6)}{128}+\frac{\nu^4-1}{15}\right)\right\}\nonumber\\
&+&\hspace{-2mm}\frac{12}{n-1}\left\{15(\var V)^4+25(\var V)^2\pi^4\left(\frac{1-\theta^4}{8}+\frac{\nu^2-1}{15}\right) \right.\nonumber\\
&+&\hspace{-2mm}16\pi^6\left(\frac{1-\theta^6}{32}+\frac{\nu^3-1}{63}\right)\var V
+32\zeta^2(3)\left(\nu^{3/2} -1\right)^2\var V \nonumber\\ &+&\hspace{-2mm}\left.2\pi^8\left(\frac{1-\theta^4}{8}+\frac{\nu^2-1}{15}\right)^2\right\}
+\frac{12}{(n-1)^2}\left\{-32\zeta^2(3)\left(\nu^{3/2} -1\right)^2\var V\right.\nonumber\\
&-&\hspace{-2mm}128\zeta(3)\left(\nu^{3/2} -1\right)\zeta(5)\left(\nu^{5/2} -1\right)
+12(\var V)^2\pi^4\left(\frac{1-\theta^4}{8}+\frac{\nu^2-1}{15}\right)\nonumber\\
&+&\hspace{-2mm}\left.13(\var V)^4\right\}
+\frac{8}{(n-1)^3}\left\{\pi^8\left(\frac{1-\theta^4}{8}+\frac{\nu^2-1}{15}\right)^2+6(\var V)^4\right.\nonumber\\
&-&\hspace{-2mm}\left.48\zeta^2(3)\left(\nu^{3/2} -1\right)^2\var V\right\},\nonumber\\
P_3&=&3\sqrt[4]{3}\sqrt{\theta^2(1-\theta^2)}+\frac{6\sqrt[4]{3}}{\pi^2}\sqrt{\pi^4\left(\frac{1-\theta^4}{8}+\frac{\nu^2-1}{15}\right)+
3(\var V)^2},\label{eq:p3}\\
P_3'&=&n^{1/4}\left( P_3+\frac{3}{2}\left(\frac{|P_1|}{n} \right)^{1/4} +\frac{6}{\pi^2}\left(\frac{|P_2|}{n} \right)^{1/4}\right)^4 -n^{1/4}P_3^4.\label{eq:p3'}
\end{eqnarray}

\begin{lemma}\label{lemma:b4}
The inequality
\begin{equation}\label{eq:nu_4moment}
\mean\left(\tilde\nu-\nu\right)^4\leqslant\frac{P_3^4}{n^2}+\frac{P_3'}{n^{9/4}}
\end{equation}
is true.
\end{lemma}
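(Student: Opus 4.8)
The plan is to reduce the statement for $\tilde\nu$ to one for the raw estimator $\hat\nu$ of (\ref{eq:nuest0}), then to split $\hat\nu-\nu$ into its $V$- and $U$-contributions and control each by its own fourth moment. First I would invoke Proposition~\ref{prop:nu_est}: its proof in fact yields the pointwise bound $|\tilde\nu-\nu|\leqslant|\hat\nu-\nu|$, so that $\mean(\tilde\nu-\nu)^4\leqslant\mean(\hat\nu-\nu)^4$ and it suffices to bound the latter. Next I would record that $\hat\nu$ is unbiased: since $\mean B_V^2=\var V$ and $\mean B_U^2=\mean\bar U^2=1-\theta^2$, substituting the expression for $\var V$ into (\ref{eq:nuest0}) gives $\mean\hat\nu=\nu$, whence
$$\hat\nu-\nu=\frac{6}{\pi^2}\bigl(B_V^2-\var V\bigr)-\frac{3}{2}\bigl(B_U^2-\mean\bar U^2\bigr).$$

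The second step is to apply the triangle inequality in $L^4$ (Minkowski) to this decomposition:
$$\bigl(\mean(\hat\nu-\nu)^4\bigr)^{1/4}\leqslant\frac{6}{\pi^2}\bigl(\mean(B_V^2-\var V)^4\bigr)^{1/4}+\frac{3}{2}\bigl(\mean(B_U^2-\mean\bar U^2)^4\bigr)^{1/4}.$$
This isolates the two sample variances and produces exactly the numerical factors $6/\pi^2$ and $3/2$ that reappear in $P_3$ and in $P_3'$.

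The third and decisive step is to evaluate the two fourth central moments of the sample variances. Writing $B_U^2=\frac{n}{n-1}\bigl(\overline{\bar U^2}-(\overline{\bar U})^2\bigr)$, expanding $(B_U^2-\mean\bar U^2)^4$, taking expectations and using independence reduces everything to the central moments of $\bar U$ up to order eight, which are supplied by the second moment lemma; the analogous computation for $V$ uses Lemma~\ref{lemma:lnV} and the third lemma up to order eight. The outcome is a rational function of $n$ whose leading $n^{-2}$ part is the ``Gaussian'' term $3(\cdot)^2/n^2$ that, after fourth roots and the Minkowski factors, assembles into $P_3/\sqrt n$, while the remaining $n^{-3},n^{-4},\dots$ contributions are collected (in absolute value) into $|P_1|/n^3$ for $U$ and $|P_2|/n^3$ for $V$. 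Applying the subadditivity $(a+b)^{1/4}\leqslant a^{1/4}+b^{1/4}$ to each fourth moment and inserting the results into the Minkowski bound yields
$$\bigl(\mean(\hat\nu-\nu)^4\bigr)^{1/4}\leqslant\frac{1}{\sqrt n}\left(P_3+\frac{3}{2}\Bigl(\frac{|P_1|}{n}\Bigr)^{1/4}+\frac{6}{\pi^2}\Bigl(\frac{|P_2|}{n}\Bigr)^{1/4}\right).$$
Raising to the fourth power and recognising the right-hand side as $P_3^4/n^2+P_3'/n^{9/4}$ through the very definition (\ref{eq:p3'}) of $P_3'$ then gives (\ref{eq:nu_4moment}).

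The hard part will be the third step: carrying out the exact expansion of $\mean(B_X^2-\mean\bar X^2)^4$ and organising the resulting polynomials in $\theta$ (for $U$) and in $\var V$, the $\zeta$-values and $\nu$ (for $V$) so that the principal term matches $P_3$ and the remainder matches $P_1,P_2$ with their precise $(n-1)^{-k}$ coefficients. The algebra is routine in principle but voluminous, and the only genuine subtlety is the bookkeeping of the finite-sample corrections coming from the $n/(n-1)$ factor and from the $(\overline{\bar X})^2$ cross terms, which is precisely what the higher-order central moments of the preceding lemmas were computed to handle.
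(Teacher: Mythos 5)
Your proposal follows essentially the same route as the paper's proof: reduction to $\hat\nu$ via Proposition~\ref{prop:nu_est}, the $L^4$ triangle inequality applied to the decomposition $\hat\nu-\nu=\frac{6}{\pi^2}\left(B_V^2-\mean B_V^2\right)-\frac{3}{2}\left(B_U^2-\mean B_U^2\right)$, exact computation of the fourth central moments of the two sample variances via the central moments of $\bar U$ and $\bar V$ up to order eight, and final assembly through $(a+b)^{1/4}\leqslant a^{1/4}+b^{1/4}$ together with the definition (\ref{eq:p3'}) of $P_3'$. The only cosmetic difference is that you invoke Minkowski's inequality in $L^4$ directly, whereas the paper derives that same bound by expanding the fourth power binomially and estimating each cross term with H\"older's inequality (and your version is in fact the cleaner one, since the paper's intermediate display keeps the negative cross terms while claiming equality with the fourth power of the sum of fourth roots).
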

\begin{proof} By combining (\ref{eq:nuest0}) and assumption~\ref{prop:nu_est}, we get
\begin{equation}\label{eq:inequlity}
\begin{split}
\mean\left(\tilde\nu\right.&-\left.\nu\right)^4\leqslant\mean\left(\hat\nu-\nu\right)^4=
\mean\left(\frac{6}{\pi^2}S_V^2-\frac{3}{2}S_U^2+1 -\mean\left( \frac{6}{\pi^2}S_V^2-
\frac{3}{2}S_U^2+1 \right)\right)^4\\
&=\frac{6^4}{\pi^8}\mean\left( B_V^2-\mean B_V^2 \right)^4
-\frac{6^4}{\pi^6}\mean\left\{ \left( B_V^2-\mean B_V^2 \right)^3\left( B_U^2-\mean B_U^2 \right) \right\}\\
&+\frac{6\cdot3^4}{\pi^4}\mean\left\{ \left( B_V^2-\mean B_V^2 \right)^2\left( B_U^2-\mean B_U^2 \right)^2 \right\}\\
&-\frac{3^4}{\pi^2}\mean\left\{ \left( B_V^2-\mean B_V^2 \right)\left( B_U^2-\mean B_U^2 \right)^3 \right\}\\
&+ \frac{3^4}{2^4}\mean\left( B_U^2-\mean B_U^2 \right)^4
\leqslant\frac{6^4}{\pi^8}\mean\left( B_V^2-\mean B_V^2 \right)^4\\
&-\frac{6^4}{\pi^6}\left(\mean\left( B_V^2-\mean B_V^2 \right)^4\right)^{3/4}
\left(\mean\left( B_U^2-\mean B_U^2 \right)^4 \right)^{1/4}\\
&+\frac{6\cdot3^4}{\pi^4}\left(\mean \left( B_V^2-\mean B_V^2 \right)^4\right)^{1/2}
\left(\mean\left( B_U^2-\mean B_U^2 \right)^2 \right)^{1/2}\\
&-\frac{3^4}{\pi^2}\left(\mean \left( B_V^2-\mean B_V^2 \right)^4\right)^{1/4}
\left(\mean\left( B_U^2-\mean B_U^2 \right)^4 \right)^{3/4}+\frac{3^4}{2^4}\mean\left( B_U^2-\mean B_U^2 \right)^4\\
&=\left( \frac{6}{\pi^2}\sqrt[4]{\mean\left( B_V^2-\mean B_V^2 \right)^4}+
\frac{3}{2}\sqrt[4]{\mean\left( B_U^2-\mean B_U^2 \right)^4} \right)^4
\end{split}
\end{equation}
Thus we need to estimate the mathematical expectations $\mean\left( B_V^2-\mean B_V^2 \right)^4$ and
$\mean\left( B_U^2-\mean B_U^2 \right)^4$.

Let $X_i,\ i=1,2,\dots,n$ are the independent random variables with mathematical expectation $a$ and variance
$b^2$. The equality
\begin{align*}
\mean\left( B_X^2\right.&-\left.\mean B_X^2 \right)^4=\mean\left[ \frac{n}{n-1}\left( \frac{1}{n}\sum_{i=1}^nX_i^2-
\left(\frac{1}{n}\sum_{i=1}^nX_i\right)^2 \right)-b^2 \right]^4\\
=&\mean\left[ \frac{n}{n-1}\frac{1}{n^2}\left( \sum_{i=1}^nX_i^2(n-1)-\sum_{i\neq j}X_iX_j \right)-b^2 \right]^4\\
=&\mean\left[ \frac{1}{n(n-1)}\left( \sum_{i=1}^n(X_i-a)^2(n-1)-\sum_{i\neq j}(X_i-a)(X_j-a) \right)-b^2 \right]^4.
\end{align*}
holds.

Let $Z_i=X_i-a$, then
$\mean Z_i=0$, and $\mean Z_i^2=b^2$. Consequently, we get
\begin{align*}
 \lefteqn{\mean\left( B_Z^2-\mean B_Z^2 \right)^4=\mean\left[ \frac{1}{n}\sum_{i=1}^n\left( Z_i^2-b^2 \right)-
\frac{1}{n(n-1)}\sum_{i\neq j}Z_iZ_j \right]^4}\\
&=\frac{1}{n^4}\mean\left[\sum_{i=1}^n\left( Z_i^2-b^2 \right) \right]^4
=\frac{4}{n^4(n-1)}\mean\left[ \left( \sum_{i=1}^n\left( Z_i^2-b^2 \right) \right)^3\sum_{i\neq j}Z_iZ_j \right]\\
&+\frac{6}{n^4(n-1)^2}\mean\left[ \left( \sum_{i=1}^n\left( Z_i^2-b^2 \right) \right)^2\left(\sum_{i\neq j}Z_iZ_j\right)^2 \right]\\
&-\frac{4}{n^4(n-1)^3}\mean\left[\!\! \left( \sum_{i=1}^n\left( Z_i^2-b^2 \right) \right)\!\!\!\left(\sum_{i\neq j}Z_iZ_j\right)^3 \right]+
\frac{1}{n^4(n-1)^4}\mean\left[ \sum_{i\neq j}Z_iZ_j \right]^4\\
&=\frac{1}{n^4}n\mean\left(Z_i^2-b^2 \right)^4 +\frac{3n(n-1)}{n^4}\left(  \mean\left(Z_i^2-b^2 \right)^2\right)^2
\nonumber\\
&-\frac{4}{n^4(n-1)}\mean\left(\left( \sum_{i=1}^n\left( Z_i^2-b^2 \right)^3
+3\sum_{i\neq j}\left( Z_i^2-b^2 \right)^2\left( Z_j^2-b^2 \right)\right.\right.\\
&+\left.\left.\sum_{i\neq j\neq k}\left( Z_i^2-b^2 \right)
\left( Z_j^2-b^2 \right)\left( Z_k^2-b^2 \right)\right)\sum_{i\neq j} Z_i Z_j\right)\\
&+\frac{6}{n^4(n-1)^2}\mean\left(\! \left( \sum_{i=1}^n\left( Z_i^2-b^2 \right)^2\!+
\sum_{i\neq j}\left( Z_i^2-b^2 \right)\left( Z_j^2-b^2 \right) \right)\!\left( 2\sum_{i_\neq j} Z_i^2Z_j^2+\right.\right.\\
&+\left.\left.4\sum_{i\neq j\neq k}Z_i^2Z_jZ_k +\sum_{i\neq j\neq k\neq m}Z_iZ_jZ_kZ_m\right) \right)\\
&-\frac{4}{n^4(n-1)^3}\mean\left( \sum_{i=1}^n\left( Z_i^2-b^2 \right)\left( 4\sum_{i\neq j}Z_i^3Z_j^3+\right.\right.
24\sum_{i\neq j\neq k}Z_i^3Z_j^2Z_k\\
&+8\sum_{i\neq j\neq k}Z_i^2Z_j^2Z_k^2
+8\hspace{-0.3cm}\sum_{i\neq j\neq k\neq l}\hspace{-0.3cm}Z_i^3Z_jZ_kZ_l+
30\hspace{-0.3cm}\sum_{i\neq j\neq k\neq l}\hspace{-0.3cm}Z_i^2Z_j^2Z_kZ_l\\
&\left.\left.+12\hspace{-0.5cm}\sum_{i\neq j\neq k\neq l\neq m}\hspace{-0.5cm}Z_i^2Z_jZ_kZ_lZ_m+
\hspace{-0.4cm}\sum_{i\neq j\neq k\neq l\neq m\neq p}\hspace{-0.5cm}Z_iZ_jZ_kZ_lZ_mZ_p \right) \right)\\
&+\frac{1}{n^4(n-1)^4}\mean\left( 8\sum_{i\neq j} Z_i^4 Z_j^4+48\sum_{i\neq j\neq k} Z_i^4 Z_j^2Z_k^2+
96\sum_{i\neq j\neq k} Z_i^3 Z_j^3 Z_k^2\right.\\
&+\left.60\hspace{-3mm}\sum_{i\neq j\neq k\neq l}\hspace{-3mm} Z_i^2 Z_j^2Z_k^2 Z_l^2\right).\\
\end{align*}
Taking into consideration that $\mean Z=0$, we get $\mean\sum_{i\neq j\neq k}Z_i^aZ_j^bZ_k=0$.
It is easy to obtain, that
\begin{align*}
\lefteqn{\mean\left(B_Z^2-b^2\right)^4=\frac{1}{n^3}\left(\mean Z^8-4\mean Z^6 b^2+6\mean Z^4b^4-
4\mean Z^2b^2+b^8\right)}&\\
&+\frac{3(n-1)}{n^3}\left(\mean Z^4-b^4\right)^2-
\frac{24}{n^4(n-1)}\mean\left(\sum_{i\neq j}Z_i\left(Z_i^2-b^2\right)^2Z_j\left(Z_j^2-b^2\right)\right)\\
&+\frac{6}{n^4(n-1)^2}\left(\left(\sum_iZ_i^4-2b^2\sum_i Z_i^2-nb^4\right)\right.\times\\
&\times\left(2\sum_{i\neq j}Z_i^2Z_j^2+4\sum_{i\neq j\neq k}Z_i^2Z_jZ_k\right)+
4\sum_{i\neq j}Z_i^2\left(Z_i^2-b^2\right)Z_j^2\left(Z_j^2-b^2\right)\\
&+\left.8\hspace{-2mm}\sum_{i\neq j\neq k}\hspace{-2mm}Z_i^2Z_j\left(Z_i^2-b^2\right)\!\!\left(Z_k^2-b^2\right)Z_k\hspace{-1mm}\right)\!-\!
\frac{4}{n^4(n-1)^3}\mean\!\left(\!8\!\sum_{i\neq j}Z_i^3Z_j^3\left(Z_i^2-b^2\right)\right.\\
&\left.+24\sum_{i\neq j\neq k}Z_i^3Z_j^2Z_k\left(Z_k^2-b^2\right)+
24\sum_{i\neq j\neq k}Z_i^2Z_j^2Z_k^2\left(Z_k^2-b^2\right)\right)+\frac{8\left(\mean Z^4\right)^2}{n^3(n-1)^3}\\
&+\frac{48(n-2)\mean Z^4 b^4}{n^3(n-1)^3}+
\frac{96(n-2)\left(\mean Z^3\right)^2b^2}{n^3(n-1)^3}+\frac{60(n-2)(n-3)b^8}{n^3(n-1)^3}\\
&=\frac{3\left(\mean Z^4-b^4\right)^2}{n^2}-\frac{3\left(\mean Z^4-b^4\right)^2}{n^3}
+\frac{1}{n^3}\left(\mean Z^8-4b^2\mean Z^6+6b^4\mean Z^4-3b^8\right)\\
&-\frac{24}{n^3} \mean Z^3\left(\mean Z^5-2b^2\mean Z^3\right)+\frac{6}{n^4(n-1)^2}\mean\left(4\sum_{i\neq j}Z_i^6Z_j^2+
2\sum_{i\neq j\neq k}Z_i^4Z_j^2Z_k^2\right.\\
&-8b^2\!\sum_{i\neq j}\!Z_i^4Z_j^2\!
-\!4b^2\hspace{-2mm}\sum_{i\neq j\neq k}\hspace{-2mm}Z_i^2Z_j^2Z_k^2\!-\!
2b^4n\!\sum_{i\neq j}\! Z_i^2Z_j^2\!+\!4(n-1)\sum_i\left(\mean Z_i^4-b^4\right)^2\\
&\left.+8(n-1)(n-2)\sum_i\left(\mean Z_i^3\right)^2\right)-\frac{32}{n^3(n-1)^2}\left(
\mean Z^3\mean Z^5-b^2\left(\mean Z^3\right)^2\right)\\
&-\frac{96(n-2)}{n^3(n-1)^2}b^2\left(\mean Z^3\right)^2
+\frac{60(n-3)}{n^3(n-1)^2}b^8-\frac{60(n-3)}{n^3(n-1)^3}b^8+\frac{96b^2\left(\mean Z^3\right)^2}{n^3(n-1)^2}\\
&-\frac{96b^2\left(\mean Z^3\right)^2}{n^3(n-1)^3}+\frac{48b^4\mean Z^4}{n^3(n-1)^2}-\frac{48b^4\mean Z^4}{n^3(n-1)^3}
+\frac{8\left(\mean Z^4\right)^2}{n^3(n-1)^3}=\frac{3\left(\mean Z^4-b^4\right)^2}{n^2}\\
&+\frac{1}{n^3}\!\left(\mean Z^8\!-\!4b^2\mean Z^6\!-\!24\mean Z^3\mean Z^5\!-\!3\left(\mean Z^4\right)^2\!+\!24b^4\mean Z^4\!+\!96b^2\left(\mean Z^3\right)^2\!-\!18b^8\right)\\
&+\frac{12}{n^3(n-1)}\left(2b^2\mean Z^6+
2\left(\mean Z^4\right)^2-17b^4\mean Z^4-12b^2\left(\mean Z^3\right)^2+18b^8\right)\\
&+\frac{4}{n^3(n-1)^2}\left(-8\mean Z^3\mean Z^5+36b^4\mean Z^4+56b^2\left(\mean Z^3\right)^2-69b^8\right)\\
&+\frac{8}{n^3(n-1)^3}\left(\left(\mean Z^4\right)^2-6b^4\mean Z^4-
12b^2\left(\mean Z^3\right)^2+15b^8\right).
\end{align*}
By substituting now (\ref{eq:Vmoment}) and (\ref{eq:Umoment}) into the expression, we obtain
\begin{align*}
\mean\left(B_U^2\right.&-\left.\mean B_U^2\right)^4=\frac{48\theta^4\left(1-\theta^2\right)^2}{n^2}+
\frac{16}{n^3}\left(15\theta^2-71\theta^4+26\theta^6-45\theta^8\right)+\\
+&\frac{12}{n^3(n-1)}\left(5-96\theta^2+302\theta^4-356\theta^6+155\theta^8\right)+\\
+&\frac{4}{n^3(n-1)^2}\left(-33+436\theta^2-1238\theta^4+1300\theta^6-465\theta^8\right)+\\
+&\frac{16}{n^3(n-1)^3}\left(5-52\theta^2+134\theta^4-132\theta^6+45\theta^8\right)=
\frac{48\theta^4(1-\theta^2)^2}{n^2}+\frac{P_1}{n^3}
\end{align*}

\begin{align*}
\mean\left(B_V^2-\right.&\left.\mean B_V^2\right)=\frac{3}{n^2}\left(\frac{\pi^4\left(1-\theta^4\right)}{8}+
\frac{\pi^4\left(\nu^2-1\right)}{15}+3\var V^2\right)^2+\frac{4}{n^3}\biggl( 18\var V^4+\\
+&39\var V^2\pi^4\left(\frac{1-\theta^4}{8}+\frac{\nu^2-1}{15}\right)+
96\var V\zeta^2(3)\left(\nu^{3/2}-1\right)\\
&+8\pi^8\left(\frac{1-\theta^4}{8}+\frac{\nu^2-1}{15}\right)^2
+48\var V\pi^6\left(\frac{1-\theta^6}{32}+\frac{\nu^3-1}{63}\right)\\
&+384\zeta(3)\left(\nu^{3/2}-1\right)\zeta(5)\left(\nu^{5/2}-1\right)
+2\pi^8\left(\frac{17\left(1-\theta^8\right)}{128}\right.\\
&+\!\left.\left.\frac{\left(\nu^4-1\right)}{15}\right)\!\right)\!+\!
\frac{12}{n^3(n-1)}\!\left(\!15\var V^4\!+\!25\var V^2\pi^4\left(\frac{1-\theta^4}{8}+\frac{\nu^2-1}{15}\right)\right.\\
+&16\var V\pi^6\left(\frac{1-\theta^6}{32}+\frac{\nu^3-1}{63}\right)+
32\var V\zeta^2(3)\left(\nu^{3/2}-1\right)^2\\
+&\left.2\pi^8\left(\frac{1-\theta^4}{8}+\frac{\nu^2-1}{15}\right)^2\right)
+\frac{12}{n^3(n-1)^2}\biggl(-32\var V\zeta^2(3)\left(\nu^{3/2}-1\right)^2\\
-&128\zeta(3)\left(\nu^{3/2}-1\right)\zeta(5)\left(\nu^{5/2}-1\right)
+\left.12\var V\pi^4\left(\frac{1-\theta^4}{8}+\frac{\nu^2-1}{15}\right)\right.\\
+&\left.13\var V^4\right)+
\frac{8}{n^3(n-1)^3}\left(\pi^8\left(\frac{1-\theta^4}{8}+\frac{\nu^2-1}{15}\right)^2
+6\var V^4\right.\\
-&48\var V\zeta^2(3)\!\left(\nu^{3/2}\!-\!1\right)^2\!\Biggr)\!=\!
\frac{3}{n^2}\!\left(\!\frac{\pi^4\left(1\!-\!\theta^4\right)}{8}\!+\!
\frac{\pi^4\left(\nu^2\!-\!1\right)}{15}\!+\!3\var V^2\!\!\right)^2\hspace{-2mm}+\!\frac{P_2}{n^3}
\end{align*}
The statement of the lemma easily follows by substituting the expressions obtained into inequality (\ref{eq:inequlity})
and the fact that
$$
(A+B)^{1/4}\leqslant|A|^{1/4}+|B|^{1/4},
$$
we arrive at .
\end{proof}
\begin{lemma}
The inequality
\begin{equation}\label{eq:nu_3moment}
\mean\left|\tilde\nu-\nu\right|^3\leqslant\frac{\sqrt{\mean\left(\hat\nu-\nu\right)^2}}{n}
\left(P_3^2+\frac{\sqrt{P_3'}}{n^{1/8}}\right)
\end{equation}
holds.
\end{lemma}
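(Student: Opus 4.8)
The plan is to obtain the third absolute moment by interpolating between the second and fourth moments of $\tilde\nu-\nu$, both of which are already under control from the preceding results. The natural tool is the Cauchy--Schwarz inequality, applied after splitting the cube as $|\tilde\nu-\nu|^3=|\tilde\nu-\nu|\cdot(\tilde\nu-\nu)^2$:
$$
\mean|\tilde\nu-\nu|^3=\mean\!\left(|\tilde\nu-\nu|\,(\tilde\nu-\nu)^2\right)\leqslant\sqrt{\mean(\tilde\nu-\nu)^2}\,\sqrt{\mean(\tilde\nu-\nu)^4}.
$$
This reduces the whole task to inserting the two bounds we already possess.

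For the first radical I would invoke Proposition~\ref{prop:nu_est}, whose proof in fact establishes the pointwise inequality $|\tilde\nu-\nu|\leqslant|\hat\nu-\nu|$; integrating gives $\mean(\tilde\nu-\nu)^2\leqslant\mean(\hat\nu-\nu)^2$, hence $\sqrt{\mean(\tilde\nu-\nu)^2}\leqslant\sqrt{\mean(\hat\nu-\nu)^2}$, which is precisely the prefactor appearing in the target inequality. For the second radical I would use Lemma~\ref{lemma:b4}, namely $\mean(\tilde\nu-\nu)^4\leqslant P_3^4/n^2+P_3'/n^{9/4}$. Substituting both bounds yields
$$
\mean|\tilde\nu-\nu|^3\leqslant\sqrt{\mean(\hat\nu-\nu)^2}\,\sqrt{\frac{P_3^4}{n^2}+\frac{P_3'}{n^{9/4}}}.
$$

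It then only remains to put the last square root into the stated shape. Using subadditivity of the square root, $\sqrt{A+B}\leqslant\sqrt A+\sqrt B$ for $A,B\geqslant0$, I would conclude
$$
\sqrt{\frac{P_3^4}{n^2}+\frac{P_3'}{n^{9/4}}}\leqslant\frac{P_3^2}{n}+\frac{\sqrt{P_3'}}{n^{9/8}}=\frac1n\left(P_3^2+\frac{\sqrt{P_3'}}{n^{1/8}}\right),
$$
and combining this with the previous display gives exactly (\ref{eq:nu_3moment}). The argument is essentially routine, being an $L^2$--$L^4$ interpolation of the cube; the only point deserving a moment's care is that every quantity placed under a radical is nonnegative, so that these manipulations are legitimate. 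This holds because $P_3\geqslant0$ is immediate from its definition in (\ref{eq:p3}), while $P_3'\geqslant0$ follows from (\ref{eq:p3'}): the bracket there equals $P_3$ augmented by nonnegative terms, so its fourth power dominates $P_3^4$. Thus the only mild obstacle is this positivity check, after which the Cauchy--Schwarz and subadditivity steps close the proof.
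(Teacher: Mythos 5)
Your proof is correct and follows essentially the same route as the paper: a Cauchy--Schwarz interpolation $\mean|\tilde\nu-\nu|^3\leqslant\sqrt{\mean(\tilde\nu-\nu)^2\,\mean(\tilde\nu-\nu)^4}$, the fourth-moment bound of Lemma~\ref{lemma:b4}, and the subadditivity $(A+B)^{1/2}\leqslant|A|^{1/2}+|B|^{1/2}$. If anything, your write-up is cleaner than the paper's, which mixes $\tilde\nu$ and $\hat\nu$ in its chain of inequalities, whereas you pass to $\hat\nu$ explicitly via the pointwise bound $|\tilde\nu-\nu|\leqslant|\hat\nu-\nu|$ from Proposition~\ref{prop:nu_est} and also verify the nonnegativity of $P_3$ and $P_3'$.
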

\begin{proof} The statement of the lemma follows immediately from the Lemma~\ref{lemma:b4} and the
inequalities
$$
(A+B)^{1/2}\leqslant|A|^{1/2}+|B|^{1/2},
$$
and
$$
\mean\left|\tilde\nu-\nu\right|^3\leqslant\mean\left|\hat\nu-\nu\right|^3\leqslant
\sqrt{\mean\left(\tilde\nu-\nu\right)^2\mean\left(\tilde\nu-\nu\right)^4}.
$$

The expressions for $\mean\left(\tilde\nu-\nu\right)^2$ was obtained by \cite{Zolotarev:AMS:86}
\begin{multline}\label{eq:nu_2moment}
\mean\left(\tilde\nu\hspace{-0.5mm}-\hspace{-0.5mm}\nu\right)^2\hspace{-0.5mm}=
\hspace{-0.5mm}\frac{1}{n}\left(\frac{22}{5}(\nu\hspace{-0.5mm}-\hspace{-0.5mm}1)^2\hspace{-0.5mm}+
\hspace{-0.5mm}\frac{6}{5}\left(9\hspace{-0.5mm}-\hspace{-0.5mm}5\theta^2\right)
(\nu\hspace{-0.5mm}-\hspace{-0.5mm}1)+3\left(1\hspace{-0.5mm}-\hspace{-0.5mm}\theta^2\right)
\left(3\hspace{-0.5mm}+\hspace{-0.5mm}\theta^2\right)\right)+\\
+\frac{1}{n(n-1)}\left(2(\nu-1)^2+6\left(1-\theta^2\right)(\nu-1)+9\left(1-\theta^2\right)^2\right)
\end{multline}
By substituting (\ref{eq:nu_2moment}) and (\ref{eq:nu_4moment}) in the previous expression and
making some transformations, we get the statement of the Lemma.
\end{proof}

\paragraph{Proof of the theorem~\ref{theo:alpha_var}.} We can expand (\ref{eq:alpha_est}) in a series with
respect to $\tilde\nu$ in the neighborhood of the true value of the parameter $\nu$
\begin{equation}\label{eq:alpha_expansion1}
\tilde\alpha=\frac{1}{\sqrt{\nu}}-\frac{1}{2}\frac{(\tilde\nu-\nu)}{\nu^{3/2}}+
\frac{3}{8}(\tilde\nu-\nu)^2f(\zeta)^{5/2},
\end{equation}
where $f(\zeta)=1/(\nu-\zeta(\tilde\nu-\nu))$. The latter summand is a remainder term in the Legandre form
and $0<\zeta<1$. Taking into account that the domaing of definition the parameter
$\tilde\nu$ is determined by the condition $\tilde\nu\geqslant1/4$, we obtain
$$
\underset{\zeta,\tilde\nu,\nu}{\max}f(\zeta)=4.
$$

\begin{figure}
\centering
\includegraphics[width=0.9\textwidth]{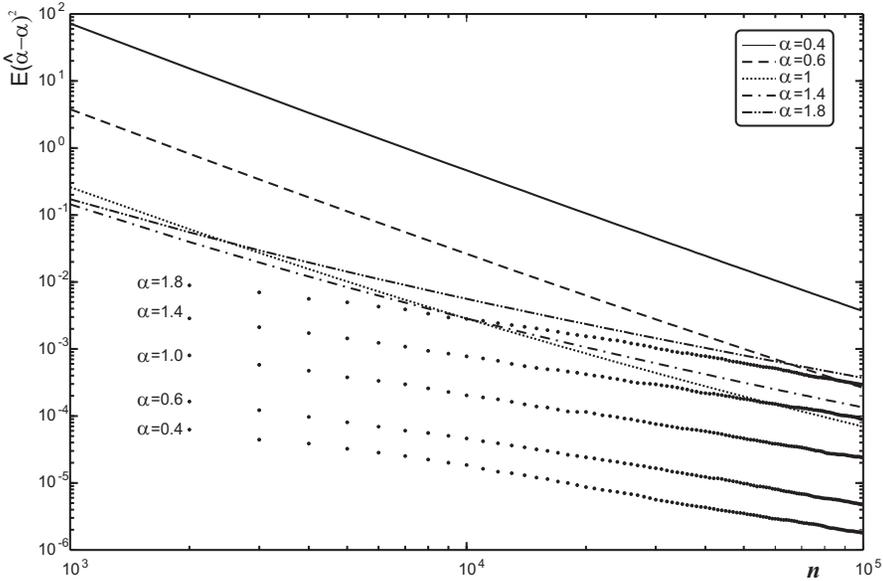}\\
\caption{Evaluation of the variance of the parameter $\hat\alpha$ estimate.
Dots denote selected values of the parameter $\alpha$. The curves denote the results of calculation by formula
(\ref{eq:alpha_vat_main}).\label{fig:alpha_var}}
\end{figure}

By substituting this estimate into (\ref{eq:alpha_expansion1}), we derive the
following inequality
$$
|\tilde\alpha-\alpha|\leqslant\frac{|\tilde\nu-\nu|}{2\nu^{3/2}}+12(\tilde\nu-\nu)^2.
$$
Next, by squaring both sides of the inequality, we
get
\begin{equation}\label{eq:alpha_var_est}
\mean(\tilde\alpha-\alpha)^2\leqslant\frac{\mean(\tilde\nu-\nu)^2}{4\nu^{3}}+
\frac{12}{\nu^{3/2}}\mean|\tilde\nu-\nu|^3+144\mean\left(\tilde\nu-\nu\right)^4.
\end{equation}
By substituting (\ref{eq:nu_2moment}), (\ref{eq:nu_3moment}) and $(\ref{eq:nu_4moment})$
into (\ref{eq:alpha_var_est}) we get the theorem.

The results of the theorem were used for calculation of sample variance of the parameter $\alpha$.
Algorithm of \cite{Kanter:AnnProbab:1975} was used for the modelling of the stable random variables. Using this
algorithm, we simulated the sample $X_1,X_2,\dots,X_n$ of the independent stable distributed random
variables with predefined values of the parameters $\alpha,\beta,\theta,\gamma$. We constructed the
estimate $\tilde\alpha$ and calculated the empirical variance of this estimate. This results of calculation
are presented in Fig.~\ref{fig:alpha_var}. Here dots denote the empirical variance of the estimate $\tilde\alpha$
for the values of this parameter shown in the figure. Curved liens denote the results
given by Theorem~\ref{theo:alpha_var}. This figure illustrates that the increasing of the
parameter $\alpha$ leads to improved variance estimate, obtained from Theorem~\ref{theo:alpha_var}.

\section{Approximation of the local fluctuation fluxes by the stable laws}

In the investigation of local fluctuation fluxes in the edge region of plasma, investigators face
the fact that the distribution densities of the amplitude of measurable fluctuation have heavy tails and
acute vertices. This fact is described in many work (see, e.g. \cite{Carreras:PhysRevLett:99,Sattin:JPhysicsConfSeries:05,
Hidalgo:PlasmaPhysControlFusion:02,Skvortsova:PlasmaPhysRep:05,Carreras:PhysPlasmas:96,
Budaev:NuclFusion:06,Zweben:PlasmaPhysControlFision:07}). As a rule, these works state the fact
on the presence of heavy tails of distributions, study the kurtosis and skewness coefficients \cite{Labit:PhysRevLett:07}, and
some of them make some assumptions on self-similarity of distributions obtained by different
facilities \cite{Pedrosa:PhysRevLett:99}. The presence of heavy tails and self-similarity of distributions
leads to the idea of applying stable laws to approximation of their description.

\begin{floatingfigure}{4cm}\centering
\includegraphics[width=0.3\textwidth]{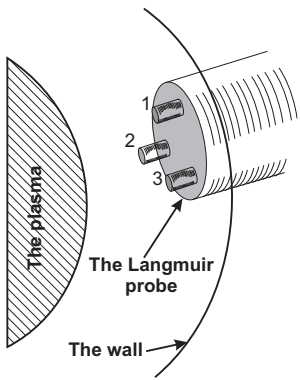}
\caption{\small The experiment scheme in the fluctuation
flux mesuarment.}\label{fig:probe} %
\end{floatingfigure}

A local fluctuation flux in the radial direction is defined as
\begin{equation}\label{eq:flux}
\tilde\Gamma=\delta n_e\delta v_r,
\end{equation}
where $\delta n_e$ are fluctuations of plasma densities, $\delta v_e=c\delta E_\theta/B$ are
fluctuations of radial velocity, $\delta E_\theta=(\delta\varphi_1-\delta\varphi_2)/\Delta\theta r$ are
fluctuations of a poloidal electric field, $\delta\varphi$ are fluctuations  of floating potential,
$\theta$ is a poloidal coordinate, and $r$ is the mean radius of magnetic surface.

Measurements were made on the stellarator L-2M. This is a dual-lead stellarator, the major radius
of the torus $R=100$ cm., and the mean minor radius is $\langle r\rangle=11.5$ cm. The plasma was produced and
heated by  electron-cyclotron heating (ECH) regime at the second harmonic gyromagnetic
frequency of the electron. The magnetic filed in the central region of the plasma was $B=1.3 - 1.4$ T.
The gyrotron radiation power was $P_0=150 - 200$ kW, and the duration of the microwave impulse was
10-12 msec. The measurements were made in the plasma with average densities
$\langle n\rangle=(1.3-1.8)\cdot 10^{13}\ cm^{-3}$, and the central electron temperature was
$T_e(0)=0.6 - 1.0$ keV. The hydrogen was used as a plasma-forming gas. In the edge plasma, with the radius
$r/r_s=0.9$ (where $r_s$ is a separatrix radius), the density was at the level of $n(r)=(1-2)\cdot10^{12}\ cm^{-3}$,
the electron temperature was $T_e(r)=30-40$, the relative density fluctuation range in the edge region of
plasma was $(\delta n/n)_{\scriptsize out}=0.2 - 0.25$, and in the inner plasma region it was at the level
$(\delta n/n)_{\scriptsize in}=0.1$.

\begin{figure}[t]
\includegraphics[width=0.9\textwidth]{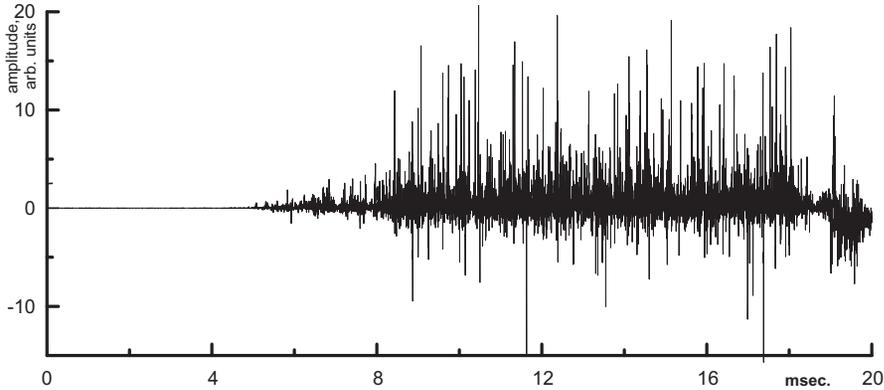}\\
\caption{\small The typical signal of the local fluctuation
flux in the radial direction.\label{fig:fluxsignal}}
\end{figure}

The local fluctuation flux measurements were made by means of a three-pin Langmuir probe. The
experiment scheme is presented in Fig.~\ref{fig:probe}. The probe is located  in a peripheric
region of the plasma. The floating potential fluctuations $\delta\varphi_1$  and $\delta\varphi_2$
at the two neighboring points are measured by probes 1 and 2. The plasma density fluctuation $\delta n_e$
is measured by probe 3. Next, the local fluctuation flux is calculated by formula (\ref{eq:flux}).
A typical  measuring signal is showed in Fig.~\ref{fig:fluxsignal}. The time from the beginning
of the signal is put off in the X-direction, and the time is measured in msec. The plasma appears
in 8 msec. and disappear in 16 msec. We shall investigate  namely this time interval (from 8 msec. to 16 msec.).

\begin{floatingfigure}{5.5cm}
\centering
\includegraphics[width=0.45\textwidth]{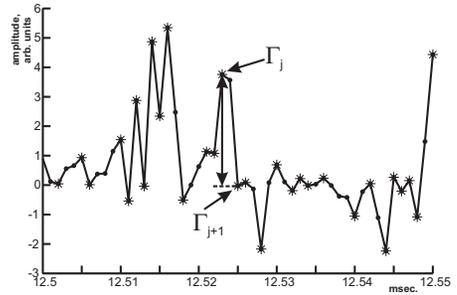}\\
\caption{\small The signal is showed in Fig.~\ref{fig:fluxsignal}
and plotted in the limits 12.5~-~12.55 msec.}\label{fig:fluxundig}
\end{floatingfigure}

Before applying the algorithm in statistical parameter estimation,  we must be sure of independence
of random variables that form a sample. Let us analyse the signal under consideration on an enlarged scale.
The investigated signal is showed in Fig.~\ref{fig:fluxundig} in the time interval from 12.5 msec. to 12.55 msec.
Here dots denote flux values, which were written by the analog-digital convertor. The signal was quantized
at the frequency 1 MHz. The figure illustrates that despite that the signal is very irregular, the
trace of the increasing and decreasing intervals of the signal is considered. It means that the function
depicted in Fig.~\ref{fig:fluxsignal} has a derivative, and, consequently, the successive points
that describe an increase and a decrease in the flux are not independent. The dependence of
investigated values does not allow us to use the algorithm of statistical estimation of parameters,
because the proposition on statistical independence of random variables that form a sample, makes
the basis of this algorithm. In order to use the algorithm, we have to exclude dependent values from the
sample.

We proceed in the following way. Let us have a time series of the flux values $\tilde\Gamma_i,\ i=1,\dots, N$,
where $N$ is the total number of points in the signal. We select from the time series only those values
which correspond to a maximum or a minimum of the flux. As a result, we get another time series which we denote
as $\tilde\Gamma_j,\ j=1,\dots, n$ and $n<N$. In Fig.~\ref{fig:fluxundig}, these points are denoted
by an asterisk. We exclude all the other points from consideration. Next, we calculate increments between
two successive values of the maximum and minimum $\Delta\tilde\Gamma_k=\tilde\Gamma_{j+1}-\tilde\Gamma_j$,
where $k=1,\dots, n-1$.

Thus, we got a sample $\tilde\Gamma_k$. We suppose that the random variables in the sample
$\tilde\Gamma_k$ are independent and have a stable distribution with the characteristic exponent $\alpha$.
Let us apply the algorithm of statistical parameter estimation to this sample, which was described
in section~\ref{sec:est}, and compare the probability density function of this sample
with the probability density function of the stable law with the parameters
$\tilde\alpha, \tilde\beta,\tilde\lambda$. The results of comparison are presented in Fig.~\ref{fig:shotpdf}.
Here, the solid line denotes the density of stable law. As in the estimation we have got the value of the
parameter $\beta=0$, we used the algorithm described in \cite{Uchaikin:JMathSci:02} for
 calculating the density of stable law. The figures illustrates that a slope of the tail of the stable law
as $x\to\infty$ is coincident with the slope of the tail of the experimental density for flux amplitudes.
It means that  the characteristic exponent of the stable law is estimated correctly. Also, we can
 infer from the form of densities that the increments of amplitudes of local fluctuation
 fluxes in the peripheral region of plasma can be described by stable laws.

\begin{figure}
\includegraphics[width=0.45\textwidth]{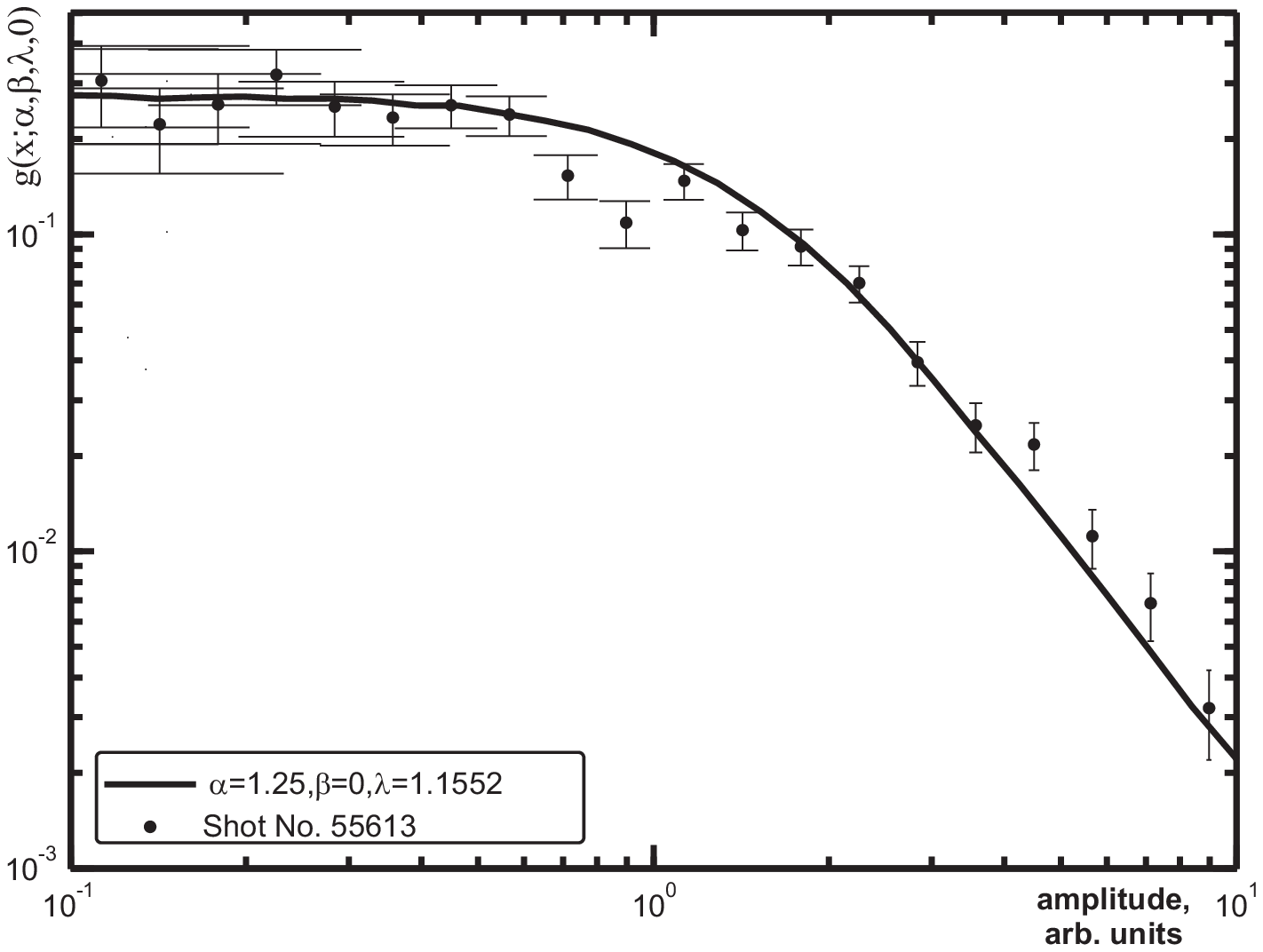}\hfill
\includegraphics[width=0.45\textwidth]{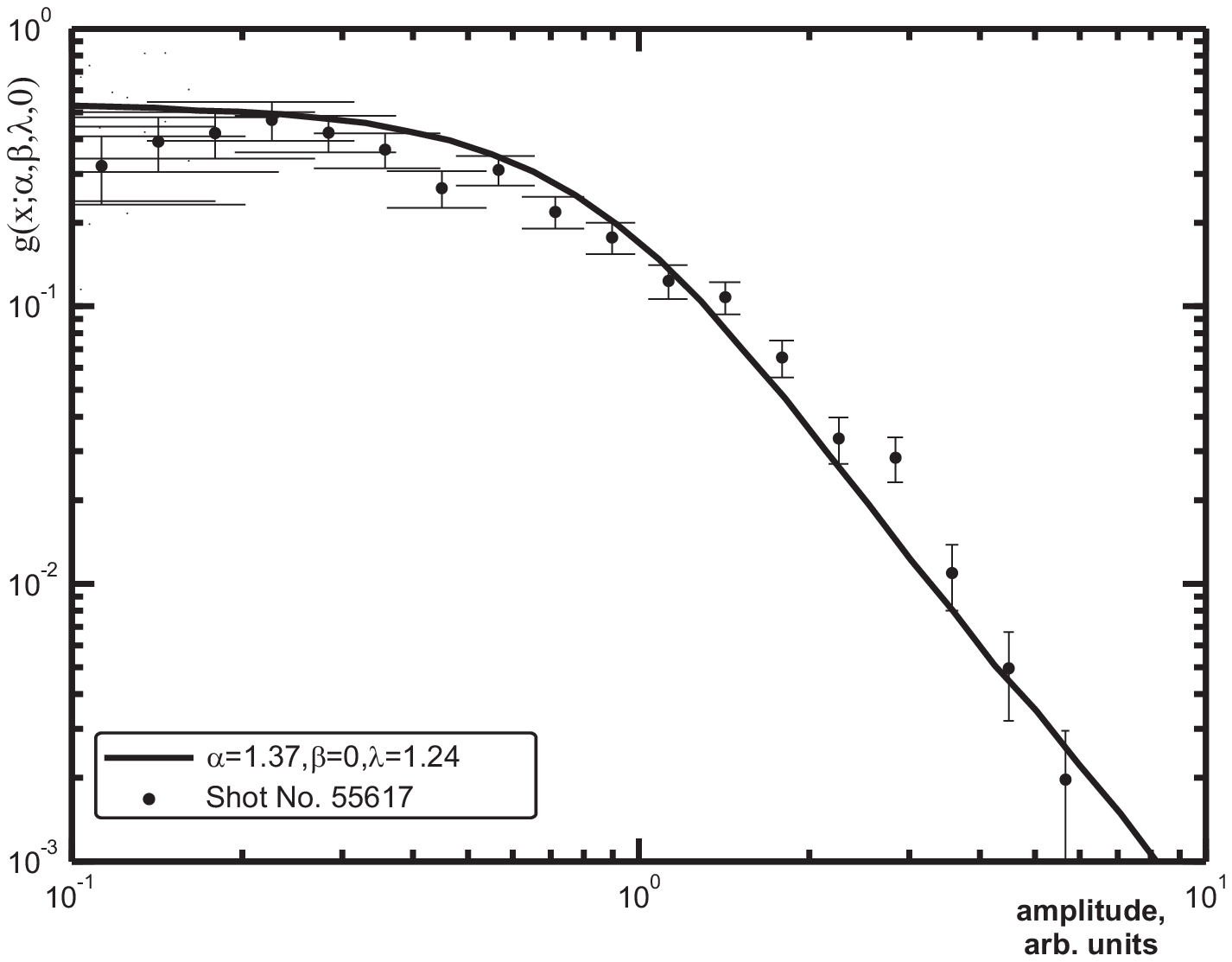}\\
\caption{\small Description of the probability density function
of amplitudes of fluctuation fluxes by stable laws for discharge No. 55613 (on the left)
and No. 55617 (on the right). Here dots denote the empirical density of amplitude increments
of fluctuation fluxes, solid lines denote the theoretical density of the stable law.\label{fig:shotpdf}}
\end{figure}

 \section{Conclusion}

The estimate of a root-mean-square deviation of the characteristic exponent of stable law has been found.
The comparison of the obtained estimation of root-mean-square deviation with the sample variance
of the parameter $\alpha$ has showed that, by increasing the value of
the parameter  $\alpha$ the estimate of root-mean-square deviation is improved.
It is possible to use this estimate for calculating of the variance of the parameter $\alpha$ with the values
$\alpha\geqslant 1.5$.

Also stable laws were used to describe fluctuation fluxes in the peripherical region of plasma.
The reason for their application  is the fact that experimental distribution densities of the flux
amplitude have heavy tails and sharp peaks. Therefore the normal distribution could not
describe these distributions. The calculation has shown that probability
density distributions of amplitudes of fluctuation fluxes are well described by stable laws.
The coincidence  of the tail slope of empirical and theoretical densities may serve as a
corroboration of this result. Application of $\xi^2$ or Kolmogorov-Smirnov goodness-of-fit tests
is rather difficult. The point is that densities of stable laws are calculated by the Monte-Carlo method
and therefore the density values contain a statistical error, which has a negative influence on the
result of the test.

\end{document}